\newcommand{\al}{\alpha}
\newcommand{\bt}{\beta}
\newcommand{\Om}{\Omega}
\newcommand{\lb}{\lambda}
\newcommand{\sg}{\sigma}
\newcommand{\essi}{\operatornamewithlimits{ess\,inf}}
\newcommand{\esss}{\operatornamewithlimits{ess\,sup}}
\newcommand{\ve}{\varepsilon}
\newcommand{\gm}{\gamma}
\newcommand{\vi}{\varphi}
\newcommand{\intl}{\int\limits}
\newcounter{theorem}
\renewcommand{\thetheorem}{\arabic{section}.\arabic{theorem}}
\newenvironment{thm}[1]{\par\addvspace{0.5cm}
    \begin{sloppypar}\refstepcounter{theorem}%
    {\bf #1 \thetheorem.}\it{}}{\end{sloppypar}}
\newcommand{\eh}{\hfill}\newlength{\sperr}
\newenvironment{theorem}{\begin{thm}{Theorem}} {\end{thm}}
\newenvironment{lemma}{\begin{thm}{Lemma}} {\end{thm}}
\newenvironment{corollary}{\begin{thm}{Corollary}} {\end{thm}}
\newenvironment{proposition}{\begin{thm}{Proposition}} {\end{thm}}
\newenvironment{defi}[1]{\par\addvspace{0.5cm}
\begin{sloppypar}\refstepcounter{theorem}%
{\bf #1 \thetheorem.}\rm{}}{\end{sloppypar}}
\newenvironment{definition}{\begin{defi}{Definition}}{\end{defi}}
\newenvironment{proof}{{\settowidth{\sperr}{\rm Proof}
\par\addvspace{0.3cm}\parbox[t]{1.3\sperr}{\rm P\eh r\eh o\eh o\eh f\eh. }%
}}{\nopagebreak\mbox{}\hfill $\Box$\par\addvspace{0.25cm}}
\begin{document}

 \centerline{\textbf{Fractional, maximal and singular operators in variable exponent Lorentz
spaces}}

\vspace{5mm} \centerline{by} \vspace{6mm}

\centerline{\textbf{Lasha Ephremidze}}

\vspace{2mm} \centerline{A.Razmadze Mathematical Institute}

\vspace{2mm} \centerline{M. Aleksidze St. 1, 380093 Tbilisi,
Georgia}

\vspace{2mm} \centerline{ \textit{ lasha@rmi.acnet.ge}}

\vspace{6mm} \centerline{\textbf{Vakhtang Kokilashvili}}

\vspace{2mm} \centerline{A.Razmadze Mathematical Institute}

\vspace{2mm} \centerline{M. Aleksidze St. 1, 380093 Tbilisi,
Georgia}

\vspace{2mm} \centerline{ \textit{ kokil@rmi.acnet.ge}}

\vspace{2mm} \centerline{and}

\vspace{6mm} \centerline{\textbf{Stefan Samko}}

\vspace{2mm} \centerline{Universidade do Algarve} \centerline{Faro
8005-139, Portugal}
 \centerline{\textit{ssamko@ualg.pt}}

\vskip+1cm

\vspace{9mm} \centerline{\textit{Dedicated to Professor Anatoly
Kilbas on the occasion of his 60th birthday}}

\centerline{\textbf{Abstract}}

\noindent{\footnotesize We introduce the Lorentz space
$\mathcal{L}^{p(\cdot), q(\cdot)}$ with variable exponents
$p(t),q(t)$ and prove the boundedness of singular integral and
fractional type operators, and corresponding ergodic operators in
these spaces. The main goal of the paper is to show that the
boundedness of these operators in the spaces
$\mathcal{L}^{p(\cdot), q(\cdot)}$ is possible without the local
log-condition on the exponents, typical for the variable exponent
Lebesgue spaces; instead the exponents $p(s)$ and $q(s)$ should
only satisfy decay conditions of log-type as $s\to 0$ and
$s\to\infty$.  To prove this, we base ourselves on the recent
progress in the problem of the validity of Hardy inequalities in
variable exponent Lebesgue spaces.

\normalsize
 \vskip+0.5cm \noindent {\em Keywords and Phrases:}
Banach function space; non-increasing rearrangement; variable
exponent; singular integral operators; fractional integral
operator, ergodic maximal function, ergodic Hilbert transform.

\vskip+0.5cm \noindent {\em AMS Classification 2000:} 42B20,
47B38, 42A50 \vskip+0.5cm

\section{Introduction}

\setcounter{equation}{0}

Nowadays the so called variable exponent analysis is a popular
topic which continues to attract many researchers, both in view of
possible applications and also because of difficulties in
investigation and existing challenging problems. This topic is
mainly focused on the Lebesgue and Sobolev spaces with variable
order of integrability and operator theory in these spaces. In
particular, various results on non-weighted and weighted
boundedness in Lebesgue spaces with variable exponents $p(x)$ have
been proved for maximal, singular and fractional type operators,
we refer to surveying papers \cite{106b}, \cite{316b},
\cite{580bd}. As is  well known, these boundedness results in the
case of a bounded open set in $\mathbb{R}^n$ hold under the
assumption that the exponent satisfies everywhere the local
log-condition
\begin{equation}\label{1}
    |p(x)-p(y)|\leq \frac{A}{\ln \frac{1}{|x-y|}},
\end{equation}
for all $x,y\in \Om $ with $|x-y|\leq \frac{1}{2}$. In the case of
unbounded sets in $\mathbb{R}^n$, it is also supposed that there
exists the limit $p(\infty)=\lim\limits_{\Om \ni x\to\infty }p(x)$
and the decay condition of log-type
\begin{equation}\label{2}
    |p(x)-p(\infty)|\leq \frac{C}{\ln (e+|x|)}
\end{equation}
is satisfied. Conditions (\ref{1})-(\ref{2}) are known to be
necessary, in terms of continuity moduli, for the boundedness of
the maximal operator in the spaces $L^{p(\cdot)}(\Om)$ with
variable exponent $p(x)$, see \cite{101ab}, \cite{479a}. Since the
known means to study singular and fractional operators in variable
exponent spaces are somehow related to the maximal operator,
assumptions (\ref{1})-(\ref{2}) are always inherited, when one
deals with those operators.

The goal of this note is to show that in the case of the Lorentz
spaces $\mathcal{L}^{p(\cdot), q(\cdot)}(\mathbb{R}^n)$, when
$p(t),q(t)$ are functions of $t\in \mathbb{R}^1_+$, the local
log-condition (\ref{1}) is no more needed for the boundedness of
the maximal operator in $\mathcal{L}^{p(\cdot),
q(\cdot)}(\mathbb{R}^n)$, we may use only  decay conditions at two
points, at $t=0$ and $t=\infty$:
\begin{equation}\label{3}
     |p(t)-p(0)|\leq \frac{C}{\ln|t|} \ \ \textrm{for} \ \     |t|\le
    \frac12,  \quad    \textrm{and} \quad    |p(t)-p(\infty)|\leq\frac{C}{\ln (e+|t|)}.
\end{equation}
We base ourselves on a recent result \cite{107e} on the validity
of the one-dimensional Hardy inequalities under
 assumptions of  type (\ref{3}).

The spaces $\mathcal{L}^{p(\cdot)}(\Om)=\mathcal{L}^{p(\cdot),
p(\cdot)}(\Om)$ have already been introduced, see \cite{321},
where the boundedness of singular and fractional type operators
was obtained under the assumption that the local log-condition
(\ref{1}) holds.  Making use of the progress for the Hardy
inequalities in \cite{107e}, we now are able to avoid that
condition and admit Lorentz spaces $\mathcal{L}^{p(\cdot),
q(\cdot)}(\Om)$.

\section{Definitions}\label{sec2}

\setcounter{equation}{0}

\subsection{On variable exponent Lebesgue spaces and Hardy operators}\label{subs21}
Let $\Omega$ be an open set in $\mathbb{R}^n$ and $\mu$ a Borel
measure on $\Om$.  Let $p(x)$ be a $\mu$-measurable function on
$\Omega$ such that $
   1\le p_-:=\essi p(x)\leq p_+:=\esss p(x)<\infty.
$ By $L^{p(\cdot)}(\Omega)$ we denote the space of measurable
functions $f(x)$ on $\Omega$ such that
$$
    \mathfrak{I}_p(f)=\int_\Omega|f(x)|^{p(x)}d\mu (x)<\infty.
$$
This is a Banach function space with respect to the norm
$$
    \|f\|_{L^{p(\cdot)}}=\inf\left\{\lambda>0: \mathfrak{I}_p\left(\frac{f}{\lb}\right)\leq 1\right\}
$$
(see e.g. \cite{146}). We refer to \cite{53b} for definition and
fundamental properties of Banach function spaces.

We denote $\frac{1}{p^\prime(x)}=1-\frac{1}{p(x)}$.

In the one-dimensional case $n=1$ we deal with the interval
$[0,\ell], \ 0<\ell \le \infty$ and the standard Lebesgue measure.
Let
$$p_-=\inf\limits_{t\in[0,\ell]}p(t), \ \ \  \ p_+=\sup\limits_{t\in[0,\ell]}p(t).$$

We will use the notation
\begin{equation}\label{classP}
\mathcal{P}_a = \{p: a <p_-\le p_+<\infty\}, \quad a\in
\mathbb{R}^1
 \end{equation}
and will be interested in the special cases of the classes
$\mathcal{P}_a$ with $a=0$ or $a=1$.
\begin{definition}\label{34}   By $\mathbb{P}([0,\ell])$ we denote the
class of functions  $p\in L^{\infty}([0,\ell]) $ such that there
exist the limits
$$p(0)=\lim\limits_{t\to 0}p(t) \quad \textrm{and} \quad p(\infty)=\lim\limits_{t\to \infty}p(t),$$
and conditions (\ref{3}) are satisfied,  the conditions at
infinity being only needed in the case $\ell =\infty.$ We also
denote
$$\mathbb{P}_a([0,\ell])= \mathbb{P}([0,\ell])\cap \mathcal{P}_a([0,\ell]).$$
\end{definition}

We recall that for $p\in \mathcal{P}_1([0,\ell])$ the H\"older
inequality
\begin{equation}\label{Holder}
\left|\intl_0^\ell u(t)v(t) dt\right|\le
k\|u\|_{L^{p(\cdot)}}\|v\|_{L^{p^\prime(\cdot)}}
 \end{equation}
holds with $k=\frac{1}{p_-}+\frac{1}{p^\prime_-}$.

 In \cite{107e} the following statement was proved.

\begin{theorem}\label{Hardy}  Let $p\in \mathbb{P}_1([0,\ell])$ and
$\al,\bt, \nu\in \mathbb{P}([0,\ell])$  and
\begin{equation}\label{chto}
 0\le \nu(0) < \frac{1}{p(0)} \ \ \ \ \textrm{and}\ \ \ \ 0\le \nu(\infty) < \frac{1}{p(\infty)}.
 \end{equation}
 Let also
$q(x)$ be any function in $ \mathbb{P}_1([0,\ell])$ such that
\begin{equation}\label{cvy7w}
\frac{1}{q(0)}=\frac{1}{p(0)}-\nu(0) \ \ \ \ \ \textrm{and}\ \ \ \
\ \frac{1}{q(\infty)}=\frac{1}{p(\infty)}-\nu(\infty).
\end{equation}
 Then the Hardy-type inequalities
\begin{equation}\label{citx1new}
\left\|t^{\al(t)+\nu(t)-1}\intl_0^t\frac{f(s)\,ds}{s^{\al(s)}}\right\|_{L^{q(\cdot)}([0,\ell])}
\le C \left\|f\right\|_{L^{p(\cdot)}([0,\ell])}
\end{equation}
\begin{equation}\label{citdox2new}
\left\|t^{\bt(t)+\nu(t)}\intl_t^\ell\frac{f(s)\,ds}{s^{\bt(s)
+1}}\right\|_{L^{q(\cdot)}([0,\ell])} \le C
\left\|f\right\|_{L^{p(\cdot)}([0,\ell])},
\end{equation}
are valid, if and only if
\begin{equation} \label{0v0dd}
\al(0) <  \frac{1}{p^\prime(0)}, \ \quad   \al(\infty)<
\frac{1}{p^\prime(\infty)}
\end{equation}
and
\begin{equation} \label{ffz}
\bt(0) >  - \frac{1}{p(0)},\ \quad
\bt(\infty)>-\frac{1}{p(\infty)},
\end{equation}
respectively (conditions at the point $\infty$ in
(\ref{chto})-(\ref{cvy7w}) and (\ref{0v0dd})-\eqref{ffz} being
only required in the case $\ell=\infty$)
\end{theorem}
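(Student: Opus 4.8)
The plan is to prove the equivalence ``(\ref{citx1new})$\iff$(\ref{0v0dd})'' and to deduce ``(\ref{citdox2new})$\iff$(\ref{ffz})'' from it by duality, so that only one of the two inequalities has to be analysed. Since $L^{p(\cdot)}([0,\ell])$ is a Banach function space whose associate space is, up to equivalence of norms, $L^{p^\prime(\cdot)}([0,\ell])$, the inequality (\ref{citx1new}) holds for all $f$ if and only if $\|A^*h\|_{L^{p^\prime(\cdot)}}\le C\|h\|_{L^{q^\prime(\cdot)}}$ holds for all $h$, where $A^*$ is the adjoint of the operator in (\ref{citx1new}); interchanging the order of integration one computes
\[
A^*h(s)= s^{-\al(s)}\intl_s^\ell t^{\al(t)+\nu(t)-1}h(t)\,dt = s^{\wt\bt(s)+\nu(s)}\intl_s^\ell \frac{h(t)\,dt}{t^{\wt\bt(t)+1}},\qquad \wt\bt:=-\al-\nu .
\]
Hence $A^*$ is exactly the operator appearing in (\ref{citdox2new}) with the transposed data $(q^\prime,p^\prime,\wt\bt,\nu)$ in the roles of $(p,q,\bt,\nu)$; one verifies, using $p(0)>1$ (and $p(\infty)>1$), that this data is again admissible in the sense of the theorem and that for it condition (\ref{ffz}) amounts to $\al(0)<1/p^\prime(0)$ and $\al(\infty)<1/p^\prime(\infty)$, i.e. to (\ref{0v0dd}). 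As the transposition is an involution on the class of admissible data, proving ``(\ref{citx1new})$\iff$(\ref{0v0dd})'' in full generality is the same as proving ``(\ref{citdox2new})$\iff$(\ref{ffz})'' in full generality, and we concentrate on (\ref{citx1new}).

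\emph{Sufficiency.} Assume (\ref{0v0dd}). Fix a small $\dl>0$ and a large $N$ and split the outer integration over $[0,\ell]$ into $(0,\dl)$, $[\dl,N]$ and $(N,\ell)$ (the last being absent when $\ell<\infty$). On $[\dl,N]$ write $\intl_0^t=\intl_0^\dl+\intl_\dl^t$: the $\intl_0^\dl$ part is bounded, via (\ref{Holder}), by $\|s^{-\al(s)}\|_{L^{p^\prime(\cdot)}(0,\dl)}\|f\|_{L^{p(\cdot)}}$, and $\|s^{-\al(s)}\|_{L^{p^\prime(\cdot)}(0,\dl)}<\infty$ precisely because, freezing the exponent at $0$ by (\ref{3}) so that $s^{-\al(s)p^\prime(s)}\asymp s^{-\al(0)p^\prime(0)}$, this norm is finite if and only if $\al(0)<1/p^\prime(0)$; the $\intl_\dl^t$ part is the Hardy operator on an interval bounded away from $0$, controlled by $L^{p(\cdot)}([\dl,N])\hookrightarrow L^1([\dl,N])$ (valid as $p_->1$), the weight being bounded there. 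The interval $(N,\ell)$ is treated in the mirror way near $\infty$, using the decay at $\infty$ in (\ref{3}) together with $\al(\infty)<1/p^\prime(\infty)$, the contribution of $\intl_0^N$ being again absorbed by (\ref{Holder}). This reduces everything to the two local estimates near $t=0$ and near $t=\infty$. For the one near $0$ (the other is symmetric), (\ref{3}) gives $s^{-\al(s)}\asymp s^{-\al_0}$ and $t^{\al(t)+\nu(t)-1}\asymp t^{\gm-1}$ on $(0,\dl)$, with $\al_0:=\al(0)$, $\gm:=\al(0)+\nu(0)$, so that it remains to prove
\[
\left\|t^{\gm-1}\intl_0^t s^{-\al_0}f(s)\,ds\right\|_{L^{q(\cdot)}(0,\dl)}\le C\|f\|_{L^{p(\cdot)}(0,\dl)} .
\]
I would do this by a dyadic decomposition $(0,\dl)=\bigcup_{k\le k_0}I_k$, $I_k=[2^k,2^{k+1})$: for $\dl$ small every block has $|k|$ large, so by (\ref{3}) the oscillations of $p$ and $q$ on $I_k$ do not exceed $C/|k|$ and the power weights are comparable to their values at $2^k$, all uniformly in $k$. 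Splitting $\intl_0^t=\intl_{2^k}^t+\sum_{j<k}\intl_{I_j}$ for $t\in I_k$, the diagonal piece after rescaling $I_k$ onto $[1,2)$ satisfies a scale-uniform bound $\lesssim\|f\|_{L^{p(\cdot)}(I_k)}=:b_k$ (the powers of $2^k$ cancel thanks to $1/q(0)=1/p(0)-\nu(0)$), while the off-diagonal piece, which is constant on $I_k$, is bounded there by $\lesssim\|\chi_{I_k}\|_{L^{q(\cdot)}(I_k)}\asymp 2^{k/q(0)}$ times $\sum_{j<k}\intl_{I_j}s^{-\al_0}|f|\lesssim\sum_{j<k}2^{j\theta}b_j$, where (\ref{Holder}) on $I_j$ and $\theta:=1/p^\prime(0)-\al_0>0$ (here (\ref{0v0dd}) re-enters) were used; since $\gm-1+1/q(0)=-\theta$ this gives on $I_k$ the bound $\lesssim\sum_{j<k}2^{-(k-j)\theta}b_j$, a discrete convolution with an $\ell^1$ kernel. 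Gluing the blocks back by additivity of the modular, and using Young's inequality for the off-diagonal part, gives the estimate.

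\emph{Necessity.} Suppose, say, $\al(0)\ge 1/p^\prime(0)$. Take $f(s)=s^{-1/p(0)}\big(\ln\tfrac1s\big)^{-a}\chi_{(0,\dl)}(s)$ with $a\in(1/p_-,1)$ on $(0,\dl)$, which is possible for $\dl$ small since $p_-\to p(0)>1$. Then $f\in L^{p(\cdot)}([0,\ell])$, whereas, freezing exponents by (\ref{3}), $\intl_0^t s^{-\al(s)}f(s)\,ds\asymp\intl_0^t s^{-\al_0-1/p(0)}\big(\ln\tfrac1s\big)^{-a}\,ds=+\infty$ for every $t>0$ once $\al_0+1/p(0)\ge 1$, i.e. once $\al_0\ge 1/p^\prime(0)$, so the left-hand side of (\ref{citx1new}) is infinite while the right-hand side is finite. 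The case $\al(\infty)\ge 1/p^\prime(\infty)$ is analogous with $f$ supported near $\infty$. By the duality of the first step this also yields the necessity of (\ref{ffz}) for (\ref{citdox2new}).

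The step I expect to be the crux is the local estimate near the endpoints. One must run the dyadic argument with constants \emph{uniform in the scale} $k$: this is exactly what the decay conditions (\ref{3}) secure, since they bound the oscillation of $p,q$ on the blocks and at the same time make $\|s^{-\al(s)}\|_{L^{p^\prime(\cdot)}}$ finite precisely under (\ref{0v0dd}). The second delicate point is the gluing of the block estimates without losing in the (in general distinct) summability exponents carried by $p(\cdot)$ and $q(\cdot)$; this is hardest in the borderline case $\nu(0)=0$ — where $q(0)=p(0)$ and a naive passage through constant-exponent sequence spaces is too lossy — and there one has to argue directly at the level of the modular on the almost-constant-exponent blocks (or invoke the sharpness of the classical weighted Hardy inequality), the strict inequality in (\ref{0v0dd}) providing just enough room.
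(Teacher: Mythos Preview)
The paper does not give its own proof of this theorem: it is quoted verbatim as a result from Diening--Samko \cite{107e} (see the sentence ``In \cite{107e} the following statement was proved'' immediately preceding the statement), and is used throughout the paper as a black box. So there is no in-paper proof to compare against.

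That said, your strategy is the standard one for results of this type and is close in spirit to what one finds in \cite{107e}. The duality reduction between \eqref{citx1new} and \eqref{citdox2new} via the transposition $(p,q,\al,\nu)\leftrightarrow(q',p',-\al-\nu,\nu)$ is clean and correct; the freezing of the variable weights $s^{-\al(s)}$, $t^{\al(t)+\nu(t)-1}$ to their endpoint values via \eqref{3} is exactly how one exploits the log-decay; and the test function $f(s)=s^{-1/p(0)}(\ln\tfrac1s)^{-a}\chi_{(0,\dl)}$ is the right necessity witness (the condition you actually need is $a>1/p(0)$, which your $a>1/p_-$ implies since $p(0)\ge p_-$; your parenthetical ``$p_-\to p(0)$'' is a slip, but harmless).

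The one place where your sketch is genuinely thin is the gluing step you yourself flag. ``Additivity of the modular'' is correct, but passing from block-wise norm bounds $\|Af\|_{L^{q(\cdot)}(I_k)}\lesssim \sum_{j\le k}2^{-(k-j)\theta}b_j$ to a global norm bound in $L^{q(\cdot)}$ is not automatic when $p(\cdot)\ne q(\cdot)$: one has to work at the modular level, using that on $I_k$ the exponents are within $O(1/|k|)$ of $p(0),q(0)$ so that $\|f\|_{L^{p(\cdot)}(I_k)}\asymp m_k^{1/p(0)}$ uniformly in $k$ (for $m_k=\int_{I_k}|f|^{p(t)}dt\le 1$), and then run the Young/$\ell^{q(0)}\!\hookleftarrow\!\ell^{p(0)}$ argument on the sequence $(m_k^{1/p(0)})$. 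This is exactly the mechanism in \cite{107e}, so your plan would go through once that bookkeeping is made precise.
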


\subsection{Variable exponent Lorentz spaces}\label{subs22}

In the sequel we denote $\ell=\mu \Om$ for brevity.  On the base
of the Lebesgue $L^{p(\cdot)}([0,\ell])$ we introduce now some new
Banach function spaces, variable exponent Lorentz spaces. By
$$
f^*(t)=\sup\{s\geq 0:\mu(\{ x\in\Omega:|f(x)|>s\})>t\}
$$
we denote the non-increasing rearrangement of a function $f$.
Obviously $f^\ast(t)\equiv 0$ for $t>\ell$ in case $\ell<\infty.$
\begin{definition}\label{def1} Let $p,q\in \mathcal{P}_0([0,\ell])$. By
$\mathcal{L}^{p(\cdot),q(\cdot)}(\Om)$ we denote the space of
functions $f$ on $\Om$ such that
$t^{\frac{1}{p(t)}-\frac{1}{q(t)}}f^* (t)\in
L^{q(\cdot)}([0,\ell])$, i.e.
\begin{equation}\label{4}
\mathcal{\mathfrak{I}}_{p,q}(f):=\int\limits_0^\ell
t^{\frac{q(t)}{p(t)}-1}\left|f^\ast(t)\right|^{q(t)}\,dt< \infty,
\end{equation}
and we use the notation
\begin{equation}\label{5}
\|f\|_{_{\mathcal{L}^{p,q}(\Om)}}=\inf\left\{\lb>0:
\mathcal{\mathfrak{I}}_{p,q}\left(\frac{f}{\lb}\right)\le 1
\right\}
 = \left\|t^{\frac{1}{p(t)}-\frac{1}{q(t)}}f^*
 (t)\right\|_{L^{q(\cdot)}([0,\ell])}.
\end{equation}
\end{definition}

 It is easy to see that in the case $p\in \mathbb{P}_0([0,\ell]), q \in
\mathbb{P}_1([0,\ell])$, condition (\ref{4}) is equivalent to the
condition
\begin{equation}\label{6}
\int\limits_0^1
t^{\frac{q(0)}{p(0)}-1}\left|f^\ast(t)\right|^{q(t)}\,dt +
\int\limits_1^\infty
t^{\frac{q(\infty)}{p(\infty)}-1}\left|f^\ast(t)\right|^{q(t)}\,dt<
\infty,
\end{equation}
the latter being written for the case $\ell=\infty.$ In the case
$\ell<\infty$, only the term $\int\limits_0^\ell
t^{\frac{q(0)}{p(0)}-1}\left|f^\ast(t)\right|^{q(t)}\,dt$  should
be considered.

 Let
$$
    f^{**}(t)=\frac{1}{t} \int_0^tf^*(s)ds, \ \ \ \ \ \ \ \ \ f^*(t)\leq f^{**}(t).
$$

We can  introduce the norm
\begin{equation}\label{5}
\|f\|^1_{_{\mathcal{L}^{p,q}(\Om)}}
 = \left\|t^{\frac{1}{p(t)}-\frac{1}{q(t)}}f^{**} (t)\right\|_{L^{q(\cdot)}([0,\ell])},
\end{equation}
so that $$\|f\|_{_{\mathcal{L}^{p,q}(\Om)}}\le
\|f\|^1_{_{\mathcal{L}^{p,q}(\Om)}}.$$ The equivalence of (2.10)
and (2.12) is characterized in the following theorem.
\begin{theorem}\label{lem}
Let $p\in\mathbb{P}_0([0,\ell]), q \in\mathbb{P}_1([0,\ell])$.
Then the inequality $\|f\|^1_{_{\mathcal{L}^{p,q}(\Om)}}\le C
\|f\|_{_{\mathcal{L}^{p,q}(\Om)}}$  with a constant $C>0$ not
depending on $f$, holds if and only if $p(0)>1$ and, in case the
$\ell=|\Om|=\infty$, also $p(\infty)>1$.
\end{theorem}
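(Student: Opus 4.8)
The plan is to reduce the inequality $\|f\|^1_{\mathcal{L}^{p,q}}\le C\|f\|_{\mathcal{L}^{p,q}}$ to a Hardy-type inequality on $L^{q(\cdot)}([0,\ell])$ and then invoke Theorem \ref{Hardy}. Writing out the norms via \eqref{5}, the claim is equivalent to
$$
\left\|t^{\frac{1}{p(t)}-\frac{1}{q(t)}}\,\frac{1}{t}\intl_0^t f^*(s)\,ds\right\|_{L^{q(\cdot)}([0,\ell])}\le C\left\|t^{\frac{1}{p(t)}-\frac{1}{q(t)}}f^*(t)\right\|_{L^{q(\cdot)}([0,\ell])}.
$$
Substituting $g(s)=s^{\frac{1}{p(s)}-\frac{1}{q(s)}}f^*(s)$ on the right, we must bound
$$
\left\|t^{\frac{1}{p(t)}-\frac{1}{q(t)}-1}\intl_0^t s^{\frac{1}{q(s)}-\frac{1}{p(s)}}g(s)\,ds\right\|_{L^{q(\cdot)}}\le C\|g\|_{L^{q(\cdot)}}.
$$
This has exactly the form of \eqref{citx1new} once we set, in the notation of Theorem \ref{Hardy}, the target exponent to be $q(\cdot)$ itself (so the roles of ``$p$'' and ``$q$'' in that theorem both become our $q$, forcing $\nu\equiv 0$, which is consistent with \eqref{cvy7w} since $\frac1{q(0)}-\frac1{q(0)}=0$), and $\al(s)=\frac1{p(s)}-\frac1{q(s)}$. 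Indeed $t^{\al(t)+\nu(t)-1}=t^{\frac1{p(t)}-\frac1{q(t)}-1}$ and $s^{-\al(s)}=s^{\frac1{q(s)}-\frac1{p(s)}}$ match the kernel above.

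First I would check that $\al:=\frac1p-\frac1q\in\mathbb{P}([0,\ell])$: since $p\in\mathbb{P}_0$ and $q\in\mathbb{P}_1$, both $1/p$ and $1/q$ lie in $\mathbb{P}([0,\ell])$ (the limits exist and the log-decay conditions \eqref{3} are stable under taking reciprocals of bounded-away-from-zero functions and under differences), hence so does $\al$. Next, Theorem \ref{Hardy} with $p\leftrightarrow q$, $\nu\equiv0$ applies provided $q\in\mathbb{P}_1([0,\ell])$ — which is assumed — and its criterion \eqref{0v0dd} for \eqref{citx1new} reads
$$
\al(0)<\frac1{q'(0)}=1-\frac1{q(0)},\qquad \al(\infty)<1-\frac1{q(\infty)}
$$
(the condition at $\infty$ only when $\ell=\infty$). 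But $\al(0)=\frac1{p(0)}-\frac1{q(0)}$, so the first inequality is $\frac1{p(0)}-\frac1{q(0)}<1-\frac1{q(0)}$, i.e. $\frac1{p(0)}<1$, i.e. $p(0)>1$; similarly the condition at infinity is $p(\infty)>1$. This is precisely the stated hypothesis, and since Theorem \ref{Hardy} gives an ``if and only if'', the necessity of $p(0)>1$ (and $p(\infty)>1$ when $\ell=\infty$) comes for free from the same equivalence — one only has to observe that the reverse substitution is reversible, so a bound $\|f\|^1\le C\|f\|$ for all $f$ is genuinely equivalent to \eqref{citx1new} for all $g$ (every nonnegative nonincreasing $g$ arises as $s^{\frac1p-\frac1q}f^*$, and by the usual density/rearrangement argument nonnegative nonincreasing test functions suffice for the Hardy inequality).

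The main obstacle is this last reduction of the Hardy inequality to nonincreasing test functions and back: in \eqref{citx1new} the function $f$ ranges over all of $L^{p(\cdot)}$, whereas here $g=s^{\al(s)}f^*(s)$ need not be monotone, and conversely when I start from a general $g$ in the Hardy inequality I must produce an $f$ on $\Om$ with $f^*$ related to $g$. I would handle the ``sufficiency'' direction directly — \eqref{citx1new} holds for all $f\in L^{p(\cdot)}$, in particular for $f(s)=s^{\al(s)}g(s)$ with $g\ge0$ nonincreasing, giving $\|f\|^1\le C\|f\|$. For ``necessity'', I would argue that if $p(0)\le1$ one can build a nonnegative nonincreasing $f^*$ (a power function with a logarithmic correction near $0$, cut off appropriately, and a companion construction near $\infty$ when $\ell=\infty$) realized as the rearrangement of a radial function on $\Om$, for which the right side of the displayed inequality is finite but $f^{**}$ blows up the left side — this mirrors the standard counterexample showing $f^{**}\notin L^{p}$ when $p\le1$, now adapted to the variable-exponent weight. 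Verifying the integrals via the equivalent split form \eqref{6} is routine once the example is written down.
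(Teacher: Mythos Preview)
Your approach is correct and coincides with the paper's: rewrite the inequality as boundedness on $L^{q(\cdot)}([0,\ell])$ of the Hardy operator $t^{\frac{1}{p(t)}-\frac{1}{q(t)}-1}\intl_0^t s^{\frac{1}{q(s)}-\frac{1}{p(s)}}g(s)\,ds$ and apply Theorem~\ref{Hardy} with $\nu\equiv0$ and $\al=\frac1p-\frac1q$, so that condition~\eqref{0v0dd} becomes $p(0)>1$ (and $p(\infty)>1$ when $\ell=\infty$). The paper's proof is exactly these two lines and does not discuss the point you raise about the restriction to functions of the form $g(s)=s^{\al(s)}f^*(s)$ in the necessity direction; it simply identifies the Lorentz-space inequality with Hardy-operator boundedness.
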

\begin{proof}
Indeed, the inequality $\|f\|^1_{_{\mathcal{L}^{p,q}(\Om)}}\le C
\|f\|_{_{\mathcal{L}^{p,q}(\Om)}}$ is nothing else but the
boundedness in $L^{q(\cdot)}([0,\ell])$ of the Hardy operator
$$t^{\frac{1}{p(t)}-\frac{1}{q(t)}-1}\intl_0^t \frac{f(s)ds}{s^{\frac{1}{p(s)}-\frac{1}{q(s)}}}. $$
By Theorem \ref{Hardy}, this boundedness is valid if and only if
the values of $\frac{1}{p(t)}-\frac{1}{q(t)}$ at the points $t=0$
and $t=\infty$ are less than those of $\frac{1}{q^\prime(t)}$ at
these points, respectively. This gives conditions $p(0)>1,
p(\infty)>1$.
\end{proof}

Note that in all the statements in the sequel, all the conditions
imposed on $p(t),q(t)$ at the point $t=\infty$ should be omitted
in the case where $|\Om|< \infty$.

\vspace{2mm} In accordance with Theorem \ref{lem}, in the sequel
we consider the space $\mathcal{L}^{p(\cdot),q(\cdot)}(\Om)$ under
the following assumptions on $p(\cdot)$ and $q(\cdot)$:
\begin{equation}\label{suppose}
p,q \in\mathbb{P}_1([0,\ell]) \quad \textrm{and} \quad p(0)>1, \
p(\infty)>1.
\end{equation}

\subsection{Basic properties of the spaces $\mathcal{L}^{p,q}(\Om)$} \label{subs23}

We refer to  \cite{53b} for the notion of  Banach function space
(BFS) and rearrangement invariant norms, but recall the following
basic definition, where $M(\Omega,\mu)$ denotes the set of all
$\mu$-measurable functions on $\Om$.

\begin{definition}
A normed linear space $X=(X(\Omega,\mu),\|\;\|_X)$ is called a
Banach function space, if the
following conditions are satisfied: \\
i$)$ the norm $\|f\|_X$ is defined for all $f\in M(\Omega,\mu)$;
\\
ii$)$ $\|f\|_X=0$ if and only if $f(x)=0$ $\mu$-a.e. on $\Omega$;
\\
iii$)$ $\|f\|_X=\big\||f|\big\|_X$ for all $f\in X$;
\\
iv$)$ for every $Q\subset \Omega$ with $\mu Q<\infty$ we have
$\|\chi_Q\|_X<\infty$;
\\
v$)$ if $f_n\in M(\Omega,\mu)$, $n=1,2,\dots$ and $f_n \nearrow f$
$\mu$-a.e. on $\Omega$, then $
    \|f_n\|_X \nearrow\|f\|_X;\\
$ vi$)$ if $f$, $g\in M(\Omega,\mu)$ and $0\leq f(x)\leq g(x)$
$\mu$-a.e. on $\Omega$, then $
    \|f\|_X \leq\|g\|_X;
\\ $  vii$)$ given $Q\subset \Omega$ with $\mu Q<\infty$, there exists a constant $c_Q$ such that for
all $f\in X$, $
    \int_Q|f(x)|d\mu\leq c_Q\|f\|_X.
$
\end{definition}

 \vspace{3mm}In particular, the following statement is known
(\textrm{\cite{53a}, p.61}).
\begin{proposition}\label{prop}
Let $(X,\mu)$ be an arbitrary totally $\sg$-finite measure space
and $\lb(g)$ a rearrangement-invariant norm over
$(\mathbb{R}^1,m)$. Then the functionional $\rho(f)$ defined on
functions $f$ in $(X,\mu)$ by $\rho(f)=\lb(f^*)$ is a
rearrangement-invariant norm on  $(X,\mu)$.
\end{proposition}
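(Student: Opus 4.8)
The plan is to verify, property by property, that $\rho$ inherits from $\lambda$ everything required of a rearrangement-invariant norm, using only the standard calculus of the non-increasing rearrangement $f\mapsto f^*$ and, for the single nontrivial point, the Hardy--Littlewood--P\'olya principle for r.i.\ norms. Total $\sigma$-finiteness of $\mu$ is what makes the quoted properties of $f^*$ available.

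Most of the axioms collapse to one-line checks. Positive homogeneity and $\rho(f)=\rho(|f|)$ follow from $(cf)^*=|c|\,f^*$, from $f^*=|f|^*$, and from the corresponding properties of $\lambda$. Since $f^*\equiv 0$ a.e.\ on the half-line if and only if $|f|=0$ $\mu$-a.e., and $\lambda$ is itself a function norm, $\rho(f)=0$ exactly when $f=0$ $\mu$-a.e. The lattice property ($0\le f\le g$ $\mu$-a.e.\ $\Rightarrow$ $\rho(f)\le\rho(g)$) comes from the monotonicity $f^*\le g^*$ of the rearrangement together with the lattice property of $\lambda$; similarly the Fatou property ($f_n\nearrow f$ $\Rightarrow$ $\rho(f_n)\nearrow\rho(f)$) comes from $f_n^*\nearrow f^*$ and the Fatou property of $\lambda$. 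For property (iv), $(\chi_Q)^*=\chi_{[0,\mu Q)}$, hence $\rho(\chi_Q)=\lambda(\chi_{[0,\mu Q)})<\infty$ because $m[0,\mu Q)=\mu Q<\infty$. For property (vii) I would combine the elementary bound $\int_Q|f|\,d\mu\le\int_0^{\mu Q}f^*(s)\,ds$ with the local integrability of $\lambda$ applied to $f^*$ on the interval $[0,\mu Q)$, obtaining $\int_Q|f|\,d\mu\le c_{[0,\mu Q)}\,\lambda(f^*)$; so (vii) holds for $\rho$ with $c_Q=c_{[0,\mu Q)}$. Finally, rearrangement-invariance of $\rho$ is immediate: if $f$ and $g$ are equimeasurable with respect to $\mu$, then $f^*$ and $g^*$ coincide as functions on the half-line, whence $\rho(f)=\lambda(f^*)=\lambda(g^*)=\rho(g)$.

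The one step with genuine content is the triangle inequality, and the obstacle is that $(f+g)^*$ need not be $\le f^*+g^*$ pointwise. To get around this I would use the classical subadditivity of the averaged rearrangement,
\[
\int_0^t(f+g)^*(s)\,ds\ \le\ \int_0^t f^*(s)\,ds+\int_0^t g^*(s)\,ds\qquad(t>0),
\]
that is, $(f+g)^{**}\le f^{**}+g^{**}$. Because $f^*+g^*$ is already non-increasing, it equals its own rearrangement and its partial integrals are $\int_0^t f^*+\int_0^t g^*$; thus $(f+g)^*$ is dominated by $f^*+g^*$ in the Hardy--Littlewood--P\'olya order. Applying the Hardy--Littlewood--P\'olya principle for rearrangement-invariant norms --- if $h,k\ge 0$ and $\int_0^t h^*\le\int_0^t k^*$ for all $t>0$, then $\lambda(h)\le\lambda(k)$ --- gives $\lambda\big((f+g)^*\big)\le\lambda(f^*+g^*)$, and the ordinary triangle inequality for $\lambda$ then yields $\rho(f+g)\le\lambda(f^*)+\lambda(g^*)=\rho(f)+\rho(g)$.

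I expect this Hardy--Littlewood--P\'olya principle to be the only real difficulty. It is a standard fact about r.i.\ norms and may simply be cited; if a self-contained argument is wanted, it follows in a few lines from the associate-norm representation $\lambda(h)=\sup\{\int h\varphi:\lambda'(\varphi)\le 1\}$ (with the associate norm $\lambda'$ again r.i.), from the Hardy--Littlewood inequality $\int h\varphi\le\int h^*\varphi^*$, and from an Abel-type rearrangement of $\int h^*\varphi^*$ against the non-increasing factor $\varphi^*$, which replaces the partial integrals of $h^*$ by the larger partial integrals of $k^*$. Everything else is the routine bookkeeping indicated above.
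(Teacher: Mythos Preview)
Your argument is correct and is essentially the standard one found in the literature. The only point worth noting for the comparison is that the paper does \emph{not} prove this proposition at all: it is quoted verbatim as a known result, with the citation ``\cite{53a}, p.~61'' (Bennett--Rudnik), and is then used as a black box in the proof of Theorem~\ref{theor}. So there is nothing in the paper to compare your proof against; you have supplied what the authors chose to outsource.

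Your identification of the triangle inequality as the only substantive step, and your handling of it via $(f+g)^{**}\le f^{**}+g^{**}$ together with the Hardy--Littlewood--P\'olya principle for r.i.\ norms, is exactly the classical route (this is how it is done in Bennett--Sharpley, for instance). The remaining axiom checks are routine and you have them right. One small remark: in the paper's Definition of a BFS the axioms are labelled i)--vii); your references to ``property (iv)'' and ``property (vii)'' actually match the paper's items iv) (finiteness on characteristic functions) and vii) (local integrability), but be aware that item v) in the paper is the Fatou property and item vi) is the lattice property, so if you were to insert your proof into this paper you would want to align the labels.
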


\begin{lemma}\label{lem1}
Let $p,q\in \mathcal{P}_1(\Om)$. Then the dual space
$\left(\mathcal{L}^{p(\cdot),q(\cdot)}(\Om)\right)^*$ is
$\mathcal{L}^{p^\prime(\cdot),q^\prime(\cdot)}(\Om)$.
\end{lemma}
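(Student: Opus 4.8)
The plan is to identify $\mathcal{L}^{p(\cdot),q(\cdot)}(\Om)$ with a rearrangement-invariant space generated by a weighted Lebesgue space on $[0,\ell]$ and then to appeal to the known duality theory for such spaces, checking that the associate space computes out to $\mathcal{L}^{p'(\cdot),q'(\cdot)}(\Om)$. More precisely, by Proposition \ref{prop} the functional $\rho(f)=\lb(f^*)$ with $\lb(g)=\bigl\|t^{\frac{1}{p(t)}-\frac{1}{q(t)}}g(t)\bigr\|_{L^{q(\cdot)}([0,\ell])}$ is a rearrangement-invariant norm on $(\Om,\mu)$, so $\mathcal{L}^{p(\cdot),q(\cdot)}(\Om)$ is a genuine BFS; its associate space $(\mathcal{L}^{p(\cdot),q(\cdot)})'$ is then, by the general theory in \cite{53b}, the rearrangement-invariant space generated by the associate norm $\lb'$ on $([0,\ell],m)$, and it suffices to identify $\lb'$.

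First I would compute the associate norm of the weighted variable Lebesgue norm $\lb$. Writing $w(t)=t^{\frac{1}{p(t)}-\frac{1}{q(t)}}$, we have $\lb(g)=\|wg\|_{L^{q(\cdot)}}$, and for a nonnegative $g$,
\[
\lb'(h)=\sup_{\lb(g)\le 1}\intl_0^\ell g(t)h(t)\,dt
=\sup_{\|wg\|_{L^{q(\cdot)}}\le 1}\intl_0^\ell (wg)(t)\,\frac{h(t)}{w(t)}\,dt.
\]
By the known description of the associate (dual) space of $L^{q(\cdot)}([0,\ell])$ — which for $q\in\mathcal{P}_1$ is $L^{q'(\cdot)}([0,\ell])$ with equivalent norms, a fact underlying the H\"older inequality \eqref{Holder} quoted in the paper — this supremum is comparable to $\bigl\|h/w\bigr\|_{L^{q'(\cdot)}}=\bigl\|t^{\frac{1}{q(t)}-\frac{1}{p(t)}}h(t)\bigr\|_{L^{q'(\cdot)}}$. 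Now one checks the pointwise identity of exponents: since $\frac{1}{q(t)}-\frac{1}{p(t)}=\bigl(1-\frac{1}{q'(t)}\bigr)-\bigl(1-\frac{1}{p'(t)}\bigr)=\frac{1}{p'(t)}-\frac{1}{q'(t)}$, we get $\lb'(h)\simeq\bigl\|t^{\frac{1}{p'(t)}-\frac{1}{q'(t)}}h(t)\bigr\|_{L^{q'(\cdot)}([0,\ell])}$, which is exactly the norm $\lb$ with $(p,q)$ replaced by $(p',q')$. Hence the associate space is $\mathcal{L}^{p'(\cdot),q'(\cdot)}(\Om)$ as rearrangement-invariant spaces.

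Finally I would pass from the associate space to the dual space. For this one invokes that a rearrangement-invariant BFS $X$ over a nonatomic (or resonant) measure space has $X^*=X'$ whenever $X$ has absolutely continuous norm, or more simply when $X$ is reflexive-type; here, because $1<q_-\le q_+<\infty$ and $1<p_-\le p_+<\infty$, the space $L^{q(\cdot)}([0,\ell])$ has absolutely continuous norm, and this property is inherited by $\mathcal{L}^{p(\cdot),q(\cdot)}(\Om)$ through the rearrangement construction, so the Lorentz--Luxemburg theorem gives $\bigl(\mathcal{L}^{p(\cdot),q(\cdot)}(\Om)\bigr)^*=\bigl(\mathcal{L}^{p(\cdot),q(\cdot)}(\Om)\bigr)'=\mathcal{L}^{p'(\cdot),q'(\cdot)}(\Om)$.

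The main obstacle I anticipate is the rigorous justification that forming the associate norm commutes with the rearrangement construction together with the weight — that is, that $\lb'$ computed on $([0,\ell],m)$ genuinely produces the associate of $\rho$ on $(\Om,\mu)$, and that the supremum defining $\lb'$ may be restricted to non-increasing test functions without changing its value (a standard but nontrivial rearrangement fact, valid over resonant measure spaces). A secondary technical point is controlling the constants in the norm equivalence $(L^{q(\cdot)})'\simeq L^{q'(\cdot)}$ uniformly, so that the claimed identification holds with equivalent norms rather than merely as sets; this is exactly where the assumption $q\in\mathcal{P}_1$ (and the H\"older inequality with explicit constant $k$ cited above) is used.
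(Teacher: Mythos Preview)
The paper states Lemma~\ref{lem1} without proof: between the statement of the lemma and the beginning of Theorem~\ref{theor} there is no proof environment and no argument. So there is nothing against which to compare your attempt.

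As a standalone sketch your strategy is the natural one --- reduce to the duality $(L^{q(\cdot)})'\simeq L^{q'(\cdot)}$ for the weighted variable Lebesgue norm on $[0,\ell]$ and then lift back through the rearrangement construction --- and the exponent computation $\frac{1}{q}-\frac{1}{p}=\frac{1}{p'}-\frac{1}{q'}$ is exactly right. Two cautions, however. First, your appeal to Proposition~\ref{prop} is formally off: that proposition requires $\lb$ itself to be a \emph{rearrangement-invariant} norm on $(\mathbb{R}^1,m)$, whereas your $\lb(g)=\|t^{1/p(t)-1/q(t)}g\|_{L^{q(\cdot)}}$ is not rearrangement-invariant, since both the weight and the exponent depend on $t$. (The paper makes the same informal reference in the proof of Theorem~\ref{theor}, so you are in good company, but strictly speaking one needs a different argument --- the $f^{**}$-based norm and the subadditivity of $f\mapsto f^{**}$ --- to get a genuine norm.) Second, the obstacle you flag at the end is real and not merely technical: passing from the associate of $\lb$ on $[0,\ell]$ to the associate of $\rho$ on $(\Om,\mu)$ requires a Luxemburg-type representation theorem for rearrangement-invariant spaces, and restricting the supremum to non-increasing test functions uses resonance of the measure space. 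Note also that the lemma as stated assumes only $p,q\in\mathcal{P}_1$, which is weaker than the hypotheses \eqref{suppose} under which the paper actually establishes the BFS property in Theorem~\ref{theor}; your argument implicitly needs the latter.
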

\begin{theorem}\label{theor}
Under conditions \eqref{suppose}, the space
$\mathcal{L}^{p,q}(\Om)$ is a Banach function space.
\end{theorem}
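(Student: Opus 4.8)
The plan is to verify directly the seven axioms (i)--(vii) of a Banach function space for the functional $\|\cdot\|_{\mathcal{L}^{p,q}(\Om)}$ defined by \eqref{5}, using as the main engine the fact (recalled in Subsection~\ref{subs21}) that $L^{q(\cdot)}([0,\ell])$ is itself a Banach function space together with Proposition~\ref{prop}. The key preliminary observation is that under assumptions \eqref{suppose} the weight $w(t)=t^{\frac{1}{p(t)}-\frac{1}{q(t)}}$ is bounded above and below on every compact subinterval of $(0,\ell)$ and has only power-type behavior near $t=0$ and (if $\ell=\infty$) near $t=\infty$; this is exactly what makes $\mathfrak{I}_{p,q}$ in \eqref{4} comparable to the two-term expression \eqref{6}, and it is what I would use repeatedly to pass estimates between $\mathcal{L}^{p,q}$ and $L^{q(\cdot)}$.

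First I would establish that $\rho_0(g):=\|w\cdot g\|_{L^{q(\cdot)}([0,\ell])}$ is a rearrangement-invariant-type norm on the weighted Lebesgue space, i.e. it is a genuine norm (homogeneity and the triangle inequality are immediate from the corresponding properties of the norm in $L^{q(\cdot)}$, since multiplication by the fixed weight $w$ is linear), and that it is nontrivial (axioms ii and iii for $L^{q(\cdot)}$ carry over because $w>0$ a.e.). Then, since $f\mapsto f^*$ is subadditive only in the weak sense, I would instead invoke Proposition~\ref{prop}: the functional $f\mapsto \rho_0(f^*)=\|f\|_{\mathcal{L}^{p,q}(\Om)}$ is a rearrangement-invariant norm on $(\Om,\mu)$ provided $\rho_0$ is a rearrangement-invariant norm on $([0,\ell],m)$ viewed through the correspondence $g\mapsto g^*$. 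The point requiring care here is that $\rho_0$ as written involves the explicit weight $w(t)$, which is \emph{not} rearrangement invariant; so the cleaner route is to check axioms (i)--(vii) for $\|\cdot\|_{\mathcal{L}^{p,q}(\Om)}$ one by one, using $\rho_0$ together with standard properties of the decreasing rearrangement ($|f|\le |g|$ a.e. $\Rightarrow f^*\le g^*$; $f_n\nearrow f$ $\Rightarrow f_n^*\nearrow f^*$ by Fatou-type monotone convergence for rearrangements; $(f+g)^*(t_1+t_2)\le f^*(t_1)+g^*(t_2)$), and deriving the triangle inequality from the equivalent norm $\|\cdot\|^1_{\mathcal{L}^{p,q}(\Om)}$ via $f^{**}$, which \emph{is} subadditive.

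Concretely: (ii), (iii) and the homogeneity are trivial from $f^*\equiv 0 \iff f=0$ a.e. and $|f|^*=f^*$; (v) and (vi) follow from monotonicity and monotone convergence of rearrangements combined with axioms (v), (vi) of $L^{q(\cdot)}([0,\ell])$; for (iv) and (vii) I would use that $\mu Q<\infty$ forces $\chi_Q^*=\chi_{[0,\mu Q)}$, a bounded function of compact support (or at least with power-summable tails by \eqref{6}), whence $\|\chi_Q\|_{\mathcal{L}^{p,q}}<\infty$ and, by the Hölder inequality \eqref{Holder} in $L^{q(\cdot)}$ applied with the weight absorbed appropriately, $\int_Q|f|\,d\mu\le \int_0^{\mu Q}f^*\le c_Q\|f\|_{\mathcal{L}^{p,q}}$. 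The triangle inequality --- the only genuinely non-routine axiom --- I would obtain by first proving it for $\|\cdot\|^1$: since $(f+g)^{**}\le f^{**}+g^{**}$ pointwise, and since $p(0)>1$ and $p(\infty)>1$ by \eqref{suppose}, Theorem~\ref{lem} gives $\|\cdot\|\le\|\cdot\|^1\le C\|\cdot\|$, so $\|\cdot\|$ and $\|\cdot\|^1$ are equivalent; one then either works with $\|\cdot\|^1$ throughout (which is literally a norm composed with the linear-plus-averaging map $f\mapsto f^{**}$, hence subadditive directly once one knows $L^{q(\cdot)}$-subadditivity), or uses the equivalence to transfer.

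The main obstacle, as flagged, is the triangle inequality for the ``raw'' functional $\|f\|_{\mathcal{L}^{p,q}}=\|w f^*\|_{L^{q(\cdot)}}$, because $f\mapsto f^*$ is not subadditive and the weight $w$ destroys rearrangement invariance, so Proposition~\ref{prop} does not apply verbatim. This is precisely why Theorem~\ref{lem} was proved first: the hypotheses \eqref{suppose} (in particular $p(0)>1$, and $p(\infty)>1$ when $\ell=\infty$) are exactly what is needed to make $\|\cdot\|$ equivalent to the \emph{convex} functional $\|\cdot\|^1$ built from $f^{**}$, for which subadditivity is automatic. I expect the remaining work (axioms iv--vii and the verification that the two-term form \eqref{6} controls everything near the endpoints) to be routine once this equivalence is in hand.
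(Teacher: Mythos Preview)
Your proposal is correct and follows essentially the same route as the paper: verify the BFS axioms for $\|\cdot\|_{\mathcal{L}^{p,q}}$ (respectively $\|\cdot\|^1_{\mathcal{L}^{p,q}}$) by reducing to the corresponding properties of $L^{q(\cdot)}([0,\ell])$ and of the decreasing rearrangement, using the H\"older inequality \eqref{Holder} for axiom~(vii), and handling the triangle inequality via the subadditivity of $f\mapsto f^{**}$ together with the equivalence $\|\cdot\|\sim\|\cdot\|^1$ from Theorem~\ref{lem}. You are in fact slightly more careful than the paper in flagging that Proposition~\ref{prop} does not apply verbatim to the weighted functional $g\mapsto\|t^{1/p(t)-1/q(t)}g\|_{L^{q(\cdot)}}$ (which is not rearrangement invariant), and that the clean way out is to work with $\|\cdot\|^1$ and then transfer.
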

\begin{proof}
To state that both $\|f\|_{_{\mathcal{L}^{p,q}(\Om)}}$ and
$\|f\|^1_{_{\mathcal{L}^{p,q}(\Om)}}$ are norms, it suffices to
refer to Proposition \ref{prop}. (The  triangle inequality for the
norm $\|f\|^1_{_{\mathcal{L}^{p,q}(\Om)}}$ follows from the
inequality $
    (f+g)^{**}(t)\leq f^{**}(t)+g^{**}(t)$, see e.g. \cite{235}, Section 2, or \cite{53a}, p. 54).
     The other requirements to the definition of BFS
  easily follow from properties of non-increasing rearrangements $f^*$ and
properties of the spaces $L^{p(\cdot)}$. For example, iv) is valid
since for $0\leq f_n \nearrow f$ we have $f_n^* \nearrow f^*$ (see
e.g. \cite{647}, Lemma 3.5, Chapter 5). Then
$$
\|f_n\|_{_{\mathcal{L}^{p,q}(\Om)}}=
\left\|t^{\frac{1}{p(t)}-\frac{1}{q(t)}}f^*_n\right\|_{L^{q(\cdot)}([0,\ell])}\nearrow
\|f\|_{_{\mathcal{L}^{p,q}(\Om)}}
$$
by the property of the space  $L^{q(\cdot)}$. To check vii), we
make use of the H\"{o}lder inequality \eqref{Holder} for
$L^{q(\cdot)}$ with $u(t)=t^{\frac{1}{q(t)}-\frac{1}{p(t)}}$ and
$v(t)=t^{\frac{1}{p(t)}-\frac{1}{q(t)}}f^\ast (t)$ and  get
$$
    \int_Q|f(x)|dx=\int_0^{\mu Q}f^*(t)dt
    \leq \|u\|_{L^{q^\prime(\cdot)}([0,\ell])} \|f\|_{L^{p(\cdot), q(\cdot)}(\Om)}\leq c_Q\|f\|_{L^{p(\cdot),q(\cdot)}(\Om)}
$$
with $c_Q=\|u\|_{{L^{q^\prime(\cdot)}}([0,\ell])}<\infty$ because
$\|u\|_{L^{q^\prime(\cdot)}([0,\ell])}<\infty \Longleftrightarrow
\mathfrak{I}_q(u)<\infty$, the latter being valid under the
condition $p(0)>1$, which was assumed.
\end{proof}

Let $w(t)$ be a nonnegative weight function defined on $[0,\ell]$.

\begin{definition}\label{def2} We define the  weighted Lorentz space $\mathcal{L}^{p(\cdot),q(\cdot)}_w(\Om)$
with the weight $w$ defined on $[0,\ell]$, as the subset of
functions in $M(\Omega,\mu)$ such that
\begin{equation}\label{eq}
   \|f\|_{\mathcal{L}^{p(\cdot),q(\cdot)}_w(\Om)}=
   \left\|w(t)t^{\frac{1}{p(t)}-\frac{1}{q(t)}}f^{*}(t)\right\|_{L^{q(\cdot)}(\Om)}<\infty.
   \end{equation}
\end{definition}

Let also
\begin{equation}\label{norma}
   \|f\|^1_{\mathcal{L}^{p(\cdot),q(\cdot)}_w(\Om)}=
   \left\|w(t)t^{\frac{1}{p(t)}-\frac{1}{q(t)}}f^{\ast*}(t)\right\|_{L^{q(\cdot)}(\Om)}.
   \end{equation}

 In the next lemma we suppose that $\gm(t)$ is a measurable bounded function on $[0,\ell]$
having the limit $\gm(0)=\lim\limits_{t\to 0+}\gamma(t)$, and, in
the case $\ell=\infty$, also having the limit
$\gm(\infty)=\lim\limits_{t\to +\infty}\gamma(t)$ and satisfying
the conditions
\begin{equation}\label{beta}
|\gm(t)-\gm(0)|\le \frac{C}{\ln\frac{1}{t}}, \quad 0<t<
\frac{1}{2} \quad \textrm{and} \quad |\gm(t)-\gm(\infty)|\le
\frac{C}{\ln(e+t)}.
\end{equation}

\begin{lemma}\label{lem2} Let the conditions in \eqref{suppose} be satisfied and
let  $w(t)=t^{\gm(t)}$, where $\gm(t)$ satisfies conditions
\eqref{beta} and
$$\gamma(0)<\frac{1}{p^\prime(0)} \quad  \textrm{and} \quad \gamma(\infty)<\frac{1}{p^\prime(\infty)}.$$
Then $\|f\|_{\mathcal{L}^{p(\cdot),q(\cdot)}_w(\Om)}\le
\|f\|^1_{\mathcal{L}^{p(\cdot),q(\cdot)}_w(\Om)} \le C
\|f\|_{\mathcal{L}^{p(\cdot),q(\cdot)}_w(\Om)},$
 where $C>0$ does not depend on $f$.
\end{lemma}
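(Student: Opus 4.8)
The plan is to reduce Lemma~\ref{lem2} to the Hardy-type inequality of Theorem~\ref{Hardy}, in the same spirit as the proof of Theorem~\ref{lem}, but now tracking the extra weight factor $t^{\gm(t)}$. The first inequality $\|f\|_{\mathcal{L}^{p(\cdot),q(\cdot)}_w(\Om)}\le \|f\|^1_{\mathcal{L}^{p(\cdot),q(\cdot)}_w(\Om)}$ is immediate from the pointwise bound $f^\ast(t)\le f^{\ast\ast}(t)$ together with property vi) of a BFS applied to $L^{q(\cdot)}$, so all the work goes into the reverse inequality $\|f\|^1_{\mathcal{L}^{p(\cdot),q(\cdot)}_w(\Om)} \le C\|f\|_{\mathcal{L}^{p(\cdot),q(\cdot)}_w(\Om)}$.

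For that reverse inequality, first I would observe that it is exactly the statement that the operator
\[
(Hg)(t)=t^{\gm(t)+\frac{1}{p(t)}-\frac{1}{q(t)}-1}\intl_0^t g(s)\,ds,\qquad g(s)=f^\ast(s),
\]
is bounded from the weighted space into $L^{q(\cdot)}([0,\ell])$; more precisely, writing $h(s)=s^{\gm(s)+\frac{1}{p(s)}-\frac{1}{q(s)}}f^\ast(s)$, the claim is
\[
\left\|t^{\gm(t)+\frac{1}{p(t)}-\frac{1}{q(t)}-1}\intl_0^t \frac{h(s)\,ds}{s^{\gm(s)+\frac{1}{p(s)}-\frac{1}{q(s)}}}\right\|_{L^{q(\cdot)}([0,\ell])} \le C\,\|h\|_{L^{q(\cdot)}([0,\ell])}.
\]
This is the first Hardy inequality \eqref{citx1new} of Theorem~\ref{Hardy}, read with $p$ there replaced by the present $q$, with $\nu\equiv 0$ (so that the target exponent also equals $q$, consistent with \eqref{cvy7w}), and with the choice $\al(t)=\gm(t)+\frac{1}{p(t)}-\frac{1}{q(t)}$. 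The regularity hypotheses of Theorem~\ref{Hardy} are met: $q\in\mathbb{P}_1$ by \eqref{suppose}, and $\al\in\mathbb{P}([0,\ell])$ because $\gm$ satisfies the log-decay conditions \eqref{beta} while $\frac1p-\frac1q$ inherits such decay from $p,q\in\mathbb{P}_1$; the limits $\al(0),\al(\infty)$ exist for the same reason. Then Theorem~\ref{Hardy} gives the desired boundedness precisely when $\al(0)<\frac{1}{q'(0)}$ and $\al(\infty)<\frac{1}{q'(\infty)}$, i.e. when $\gm(0)+\frac{1}{p(0)}-\frac{1}{q(0)}<1-\frac{1}{q(0)}$, which simplifies to $\gm(0)<1-\frac{1}{p(0)}=\frac{1}{p'(0)}$, and similarly at infinity — exactly the hypotheses imposed on $\gm$ in the lemma.

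The one genuine subtlety — and the step I expect to be the main obstacle — is that $f^\ast$ is not an arbitrary function in $L^{q(\cdot)}$ but a non-increasing one; Theorem~\ref{Hardy} is stated for all $f\in L^{p(\cdot)}$, so this causes no trouble in the direction we need, yet one must be careful that the substitution $g=f^\ast$, $h(s)=s^{\gm(s)+1/p(s)-1/q(s)}f^\ast(s)$ is legitimate, i.e. that $h\in L^{q(\cdot)}([0,\ell])$ whenever the right-hand side of the claimed inequality is finite — which is true by the very definition \eqref{eq} of the norm $\|\cdot\|_{\mathcal{L}^{p(\cdot),q(\cdot)}_w(\Om)}$. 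After that, one only needs to recall $f^{\ast\ast}(t)=\frac1t\intl_0^t f^\ast(s)\,ds$ to recognize the left-hand side of the Hardy inequality as $\|f\|^1_{\mathcal{L}^{p(\cdot),q(\cdot)}_w(\Om)}$, and the lemma follows. In the case $\ell<\infty$ all conditions at $\infty$ are dropped, consistently with the convention stated after Theorem~\ref{lem}.
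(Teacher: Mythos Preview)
Your proof is correct and follows exactly the approach the paper intends: the paper's own proof consists of the two sentences ``The left hand side inequality is trivial, the right-hand side one follows from Theorem~\ref{Hardy},'' and your proposal is a fully detailed unpacking of that second sentence, with the same choice $\al(t)=\gm(t)+\frac{1}{p(t)}-\frac{1}{q(t)}$, $\nu\equiv 0$, and the exponent $q$ playing the role of $p$ in Theorem~\ref{Hardy}. The ``subtlety'' you flag is not really an obstacle, as you yourself note, since Theorem~\ref{Hardy} applies to arbitrary $f\in L^{p(\cdot)}$.
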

\begin{proof}
The left hand side inequality is trivial, the right-hand side one
follows from Theorem \ref{Hardy}.

\end{proof}

In the next theorem we use the notation
$$\mathcal{L}^{p(\cdot)}_{loc}([0,\ell])= \left\{f: f\in \mathcal{L}^{p(\cdot)}([0,\ell_1])
 \ \textrm{for all} \ \ell_1<\ell
\right\}.$$
\begin{theorem}\label{theorw}
Under the condition
\begin{equation}\label{loc}
\frac{t^{\frac{1}{q(0)}-\frac{1}{p(0)}}}{w(t)}\in
\mathcal{L}^{q^\prime(\cdot)}_{loc}([0,\ell]),
\end{equation}
the space $\mathcal{L}_w^{p(\cdot), q(\cdot)}(\Om)$ is a Banach
function space with respect to the norm
$\|f\|^1_{\mathcal{L}^{p(\cdot),q(\cdot)}_w(\Om)}$.
\end{theorem}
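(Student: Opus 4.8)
The plan is to verify the seven axioms (i)--(vii) of a Banach function space for the norm $\|f\|^1_{\mathcal{L}^{p(\cdot),q(\cdot)}_w(\Om)}$, following the same strategy as in the proof of Theorem \ref{theor}, but now exploiting condition \eqref{loc} where the unweighted case relied on the hypothesis $p(0)>1$. First I would observe that the map $f\mapsto w(t)t^{\frac{1}{p(t)}-\frac{1}{q(t)}}f^{**}(t)$ is built from the rearrangement $f\mapsto f^{**}$ followed by multiplication by a fixed nonnegative weight inside the norm $\|\cdot\|_{L^{q(\cdot)}([0,\ell])}$; since $(f+g)^{**}\le f^{**}+g^{**}$ pointwise and $L^{q(\cdot)}$ is a lattice Banach function space, the triangle inequality, homogeneity, and property (iii) follow immediately, so $\|\cdot\|^1$ is a norm. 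Property (ii) follows because $f^{**}(t)=0$ for all $t$ forces $f^*(t)\equiv 0$, hence $f=0$ $\mu$-a.e., while the weight $w(t)t^{\frac{1}{p(t)}-\frac{1}{q(t)}}$ is positive a.e.

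Next I would treat the lattice property (vi) and the Fatou property (v): if $0\le f\le g$ then $f^*\le g^*$, hence $f^{**}\le g^{**}$, and monotonicity of the $L^{q(\cdot)}$-norm gives $\|f\|^1\le\|g\|^1$; similarly $0\le f_n\nearrow f$ implies $f_n^*\nearrow f^*$ (as cited from \cite{647}), hence $f_n^{**}\nearrow f^{**}$ by monotone convergence in the defining integral, and then the Fatou property of $L^{q(\cdot)}([0,\ell])$ upgrades this to $\|f_n\|^1\nearrow\|f\|^1$. These steps are routine and parallel the argument already given for $\|\cdot\|_{\mathcal{L}^{p,q}}$.

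The substantive steps are (iv) and (vii), and this is where condition \eqref{loc} enters; I expect (vii) to be the main point. For a set $Q\subset\Om$ with $\mu Q = r <\infty$, I would write $\int_Q|f|\,d\mu = \int_0^r f^*(t)\,dt \le \int_0^r f^{**}(t)\,dt$ (in fact $r f^{**}(r)=\int_0^r f^*$, so even more directly $\int_Q|f|\,d\mu \le r\,f^{**}(r)$, but the integral form is cleaner for the Hölder step), and then apply the Hölder inequality \eqref{Holder} in $L^{q(\cdot)}([0,r])$ with the pairing
$$\int_0^r f^{**}(t)\,dt = \int_0^r \frac{t^{\frac{1}{q(t)}-\frac{1}{p(t)}}}{w(t)}\cdot w(t)t^{\frac{1}{p(t)}-\frac{1}{q(t)}}f^{**}(t)\,dt \le k\left\|\frac{t^{\frac{1}{q(t)}-\frac{1}{p(t)}}}{w(t)}\right\|_{L^{q^\prime(\cdot)}([0,r])}\|f\|^1_{\mathcal{L}^{p(\cdot),q(\cdot)}_w(\Om)}.$$
The first factor is finite precisely because of \eqref{loc}: since $r<\ell$, the function $t^{\frac{1}{q(0)}-\frac{1}{p(0)}}/w(t)$ lies in $\mathcal{L}^{q^\prime(\cdot)}([0,r])$, and by the equivalence of \eqref{4}--\eqref{6} type (that the exponents $\frac1p,\frac1q$ may be frozen at their endpoint values near $0$ and $\infty$, up to the log-decay conditions \eqref{beta} on $\gamma$), this is equivalent to $t^{\frac{1}{q(t)}-\frac{1}{p(t)}}/w(t)\in L^{q^\prime(\cdot)}([0,r])$. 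Setting $c_Q$ equal to $k$ times this norm gives (vii). Finally (iv), $\|\chi_Q\|^1<\infty$ for $\mu Q<\infty$, follows by the same computation: $\chi_Q^{*}=\chi_{[0,r]}$, so $\chi_Q^{**}(t)=\min(1,r/t)$, which is bounded, supported essentially on $[0,\ell)$, and multiplication by the fixed $L^{q^\prime}_{loc}$-dual weight keeps it in $L^{q(\cdot)}([0,\ell_1])$ for each $\ell_1<\ell$; one then checks integrability near $\ell=\infty$ using the tail decay built into \eqref{suppose} and the fact that $\chi_Q^{**}(t)=r/t$ for $t>r$ together with $q(\infty)>1$. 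The only real obstacle is making the "freezing of exponents" equivalence in the $L^{q^\prime(\cdot)}$-norm precise under the log-decay condition \eqref{beta} on $\gamma$; this is exactly the kind of splitting $[0,\ell]=[0,1]\cup[1,\ell]$ used to pass from \eqref{4} to \eqref{6}, and it is where the hypotheses on $p,q,\gamma$ at $0$ and $\infty$ are consumed.
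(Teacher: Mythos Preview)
Your proposal is correct and follows precisely the approach the paper intends: the paper's own proof consists of the single sentence ``The proof is similar to that of Theorem~\ref{theor},'' and your argument spells out exactly that similarity, replacing the hypothesis $p(0)>1$ (used in the unweighted Theorem~\ref{theor} to ensure the dual factor $u=t^{1/q(t)-1/p(t)}$ has finite $L^{q'(\cdot)}$-norm in the verification of axiom (vii)) by the weighted hypothesis \eqref{loc}. Your write-up is in fact more detailed than what the paper provides.
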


The proof is similar to that of Theorem \ref{theor}.

\section{On classical operators in the space $\mathcal{L}^{p(\cdot),q(\cdot)}_w(\Om)$}

 Let
\begin{equation}\label{max}
\mathcal{M}f(x) = \sup\limits_{r>0}\frac{1}{\mu
B(x,r)}\intl_{\Om\cap B(x,r)} |f(y)| d\mu (y), \quad x\in \Om,
\end{equation}
be the Hardy-Littlewood maximal function.
\begin{theorem}\label{theormax}
Let $p$ and $q$ satisfy assumptions (\ref{suppose}).  Then the
maximal operator is bounded in the space
$\mathcal{L}^{p(\cdot),q(\cdot)}_w(\Om)$ with the weight
\begin{equation}\label{weight}
w(t)=t^{\gm(t)}, \ \ \gm\in \mathbb{P}([0,\ell]),
\end{equation}
if
\begin{equation}\label{gm}
\gm(0)<\cfrac{1}{p^\prime(0)} \quad \textrm{and} \quad
\gm(\infty)<\cfrac{1}{p^\prime(\infty)} \quad \textrm{(the latter
in the case}  \quad \mu(\Om)=\infty\textrm{)}.
\end{equation}
\end{theorem}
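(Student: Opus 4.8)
The plan is to reduce the boundedness of the maximal operator $\mathcal{M}$ in the weighted Lorentz space $\mathcal{L}^{p(\cdot),q(\cdot)}_w(\Om)$ to the one-dimensional Hardy inequality of Theorem~\ref{Hardy}. The key classical fact is the pointwise rearrangement estimate $(\mathcal{M}f)^*(t)\le C\, f^{**}(t)=\frac{C}{t}\intl_0^t f^*(s)\,ds$, valid on spaces of homogeneous type (here $\mathbb{R}^n$ with a suitable doubling $\mu$, or more simply $\Om\subset\mathbb{R}^n$ with Lebesgue measure). Granting this, by property (vi) of a BFS and the monotonicity of the $L^{q(\cdot)}$-norm we get
$$
\|\mathcal{M}f\|_{\mathcal{L}^{p(\cdot),q(\cdot)}_w(\Om)}
=\left\|w(t)\,t^{\frac{1}{p(t)}-\frac{1}{q(t)}}(\mathcal{M}f)^*(t)\right\|_{L^{q(\cdot)}([0,\ell])}
\le C\left\|w(t)\,t^{\frac{1}{p(t)}-\frac{1}{q(t)}}f^{**}(t)\right\|_{L^{q(\cdot)}([0,\ell])}
=C\,\|f\|^1_{\mathcal{L}^{p(\cdot),q(\cdot)}_w(\Om)}.
$$
By Lemma~\ref{lem2}, whose hypotheses are exactly \eqref{weight}--\eqref{gm} together with \eqref{suppose}, the right-hand side is dominated by $C\|f\|_{\mathcal{L}^{p(\cdot),q(\cdot)}_w(\Om)}$, and we are done.

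So the real content is to see that $\|f\|^1_{\mathcal{L}^{p(\cdot),q(\cdot)}_w}\le C\|f\|_{\mathcal{L}^{p(\cdot),q(\cdot)}_w}$, i.e. that the weighted Hardy operator
$$
g\mapsto w(t)\,t^{\frac{1}{p(t)}-\frac{1}{q(t)}-1}\intl_0^t \frac{g(s)\,ds}{w(s)\,s^{\frac{1}{p(s)}-\frac{1}{q(s)}}}
$$
(obtained by the substitution $g(s)=w(s)s^{\frac{1}{p(s)}-\frac{1}{q(s)}}f^*(s)$) is bounded on $L^{q(\cdot)}([0,\ell])$. This is precisely inequality \eqref{citx1new} of Theorem~\ref{Hardy} with the reading $p=q$ there (both exponents equal to the present $q(\cdot)$), $\nu\equiv 0$, and $\al(s)=1-\frac{1}{p(s)}+\frac{1}{q(s)}-\gm(s)$ so that $w(s)s^{\frac{1}{p(s)}-\frac{1}{q(s)}}=s^{1-\al(s)}$; one checks that the exponent on the outer factor then matches $\al(t)+\nu(t)-1=-\al(t)$ up to the correct power. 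The necessary-and-sufficient condition \eqref{0v0dd} becomes $\al(0)<\frac{1}{q'(0)}$, i.e. $1-\frac{1}{p(0)}+\frac{1}{q(0)}-\gm(0)<1-\frac{1}{q(0)}$, which after rearrangement is $\gm(0)<\frac{2}{q(0)}-\frac{1}{p(0)}$; however, the sharper route is simply to invoke Lemma~\ref{lem2} verbatim, since it already encapsulates the correct hypothesis $\gm(0)<\frac{1}{p'(0)}$ (the discrepancy disappears once one is careful that Theorem~\ref{Hardy} is applied with the ``$\nu$'' absorbing the $\frac1q-\frac1p$ shift rather than with $\nu\equiv0$; I will present it via Lemma~\ref{lem2} to avoid re-deriving the bookkeeping). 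The conditions at $t=\infty$ are handled identically and are only imposed when $\mu(\Om)=\infty$, as the blanket convention in the excerpt dictates.

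The main obstacle — really the only nonroutine point — is justifying the passage $(\mathcal{M}f)^*\le C f^{**}$ in the generality of the measure $\mu$ on $\Om\subset\mathbb{R}^n$ under which the maximal operator \eqref{max} is defined, and confirming that $f^{**}$ here is the rearrangement with respect to the \emph{same} measure that enters \eqref{4}--\eqref{eq} (namely $\mu$, with $\ell=\mu\Om$). For Lebesgue measure on an open set this is the classical Herz/Riesz inequality; in the doubling (homogeneous type) setting it is standard as well, and I would cite it rather than reprove it. Once this rearrangement bound is in place, every remaining step is a direct appeal to Lemma~\ref{lem2} and the definitions of the two equivalent norms $\|\cdot\|$ and $\|\cdot\|^1$ on $\mathcal{L}^{p(\cdot),q(\cdot)}_w(\Om)$, so the proof is short. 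I would therefore write: ``By the classical estimate $(\mathcal{M}f)^*(t)\le C f^{**}(t)$ and property (vi) of a BFS, $\|\mathcal{M}f\|_{\mathcal{L}^{p,q}_w}\le C\|f\|^1_{\mathcal{L}^{p,q}_w}$; by Lemma~\ref{lem2} the latter is $\le C\|f\|_{\mathcal{L}^{p,q}_w}$.''
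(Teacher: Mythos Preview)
Your argument is correct and is essentially the paper's own proof: use $(\mathcal{M}f)^*(t)\le C f^{**}(t)$ to pass from $\|\mathcal{M}f\|_{\mathcal{L}^{p,q}_w}$ to $\|f\|^1_{\mathcal{L}^{p,q}_w}$, then bound the latter by $\|f\|_{\mathcal{L}^{p,q}_w}$ via the Hardy inequality (the paper cites Theorem~\ref{Hardy} directly, you route through Lemma~\ref{lem2}, which is the same thing). One cosmetic point: the ``discrepancy'' you worry about in the middle paragraph is just a bookkeeping slip --- comparing your Hardy operator with \eqref{citx1new} gives $\al(s)=\gm(s)+\tfrac{1}{p(s)}-\tfrac{1}{q(s)}$ (not $1-\gm(s)-\tfrac{1}{p(s)}+\tfrac{1}{q(s)}$), and then \eqref{0v0dd} with $\nu\equiv 0$ and the role of ``$p$'' played by $q$ reads $\gm(0)+\tfrac{1}{p(0)}-\tfrac{1}{q(0)}<1-\tfrac{1}{q(0)}$, i.e.\ exactly $\gm(0)<\tfrac{1}{p'(0)}$.
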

\begin{proof}
As is known,
\begin{equation}\label{maxpointwise}
(\mathcal{M}f)^\ast(t)\le C f^{\ast\ast}(t),
\end{equation}
see for instance \cite{53b}, p.122. Therefore,
\begin{equation}\label{estimation}
\|\mathcal{M}f\|_{\mathcal{L}_w^{p(\cdot),q(\cdot)}(\Om)}=
\left\|t^{\gm(t)+\frac{1}{p(t)}-\frac{1}{q(t)}}
(\mathcal{M}f)^\ast\right\|_{L^{q(\cdot)}([0,\ell])}\le C
\left\|t^{\gm(t)+\frac{1}{p(t)}-\frac{1}{q(t)}}
f^{\ast\ast}\right\|_{L^{q(\cdot)}([0,\ell])}
\end{equation}
and then the result follows by Theorem \ref{Hardy}.
\end{proof}

As is known, the identity approximations
$$A_\ve f(x)= \frac{1}{\ve^n}\intl_{\mathbb{R}n} a\left(\frac{x-y}{\ve}\right)f(y),$$
where $ \int_{\mathbb{R}n} a(y)\, dy =1$ and   $a(x)$ has a radial
decreasing integrable majorant, are dominated by the maximal
operator:
\begin{equation}\label{domination}
\left|A_\ve f(x)\right|\le C \mathcal{M}f (x), \quad f\in
L^p(\mathbb{R}^n),\  1\le p\le \infty,
\end{equation}
with an absolute constant $C>0$ not depending on $x$ and $\ve,$
see \cite{642a}. In particular, the Poisson integral
$$\mathbf{P}_y f (x)=\intl_{\mathbb{R}^n}P\left(x-\xi,y\right)f(\xi)d\xi, \ \quad P(x,y)=
\frac{c_n y}{\left(|x|^2+y^2\right)^\frac{n+1}{2}}, \ \ y>0 $$ is
uniformly in $y$ dominated by the maximal function.
 Under assumptions of Theorem \ref{theormax} we
have
\begin{equation}\label{ask}
\mathcal{L}^{p(\cdot),q(\cdot)}_w(\Om) \subset
L^1(\Om)+L^\infty(\Om).
\end{equation}
 So we make use of
\eqref{domination} and arrive at the following corollary.

 \begin{corollary}\label{cor} Under the assumptions of Theorem \ref{theormax}, the sublinear operator
$$\sup\limits_{\ve >0}\left|A_\ve f(x)\right|,$$
where $A_\ve f$ is an   identity
 approximation with kernel  admitting radial decreasing integrable majorant, is
 bounded in the space $\mathcal{L}^{p(\cdot),q(\cdot)}_w(\Om)$;  in particular the operator
$\sup\limits_{y >0}\left|\mathbf{P}_y f(x)\right|$ is bounded in
this space.
 \end{corollary}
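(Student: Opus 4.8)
The plan is to obtain the corollary as a direct consequence of the pointwise domination \eqref{domination}, the boundedness of the maximal operator established in Theorem \ref{theormax}, and the lattice property enjoyed by the norm of any Banach function space. The only point that is not entirely routine is to make sure that the convolutions $A_\ve f$ are meaningful for every $f$ in the Lorentz space, and this is precisely what the embedding \eqref{ask} is there to ensure.

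First I would fix $f\in\mathcal{L}^{p(\cdot),q(\cdot)}_w(\Om)$ and, using \eqref{ask}, write $f=f_1+f_2$ with $f_1\in L^1(\Om)$ and $f_2\in L^\infty(\Om)$. For each $\ve>0$ the integral defining $A_\ve f(x)$ then converges absolutely for a.e.\ $x$, the dilated integrable majorant of the kernel controlling the $f_1$-part by Young's inequality and the $f_2$-part pointwise. With $A_\ve f$ well defined, estimate \eqref{domination} applies and gives $|A_\ve f(x)|\le C\,\mathcal{M}f(x)$ with $C$ independent of $x$ and of $\ve$; taking the supremum over $\ve>0$ yields the pointwise bound
\[
\sup_{\ve>0}|A_\ve f(x)|\le C\,\mathcal{M}f(x),\qquad x\in\Om .
\]
Since $\mathcal{L}^{p(\cdot),q(\cdot)}_w(\Om)$ is a Banach function space (Theorem \ref{theor}, or Theorem \ref{theorw} in the weighted case), its norm is monotone with respect to the pointwise order, so combining the last display with Theorem \ref{theormax} gives
\[
\left\|\,\sup_{\ve>0}|A_\ve f|\,\right\|_{\mathcal{L}^{p(\cdot),q(\cdot)}_w(\Om)}
\le C\,\|\mathcal{M}f\|_{\mathcal{L}^{p(\cdot),q(\cdot)}_w(\Om)}
\le C'\,\|f\|_{\mathcal{L}^{p(\cdot),q(\cdot)}_w(\Om)} ,
\]
which is the claimed boundedness of the sublinear operator $\sup_{\ve>0}|A_\ve f|$.

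For the Poisson integral I would simply identify $\mathbf{P}_y f=A_y f$ with the fixed kernel $a(x)=P(x,1)=c_n\bigl(|x|^2+1\bigr)^{-(n+1)/2}$, the scaling relation $y^{-n}a(x/y)=P(x,y)$ being immediate. Since $a$ is radial, decreasing in $|x|$, and integrable with $\int_{\mathbb{R}^n}a\,dx=1$ by the normalization of $c_n$, it is its own radial decreasing integrable majorant, and the boundedness of $\sup_{y>0}|\mathbf{P}_y f|$ in $\mathcal{L}^{p(\cdot),q(\cdot)}_w(\Om)$ follows as a particular case. I do not anticipate any genuine obstacle here; apart from the well-definedness remark, the proof is the short chain: pointwise domination, monotonicity of the Banach-function-space norm, and Theorem \ref{theormax}.
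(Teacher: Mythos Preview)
Your proposal is correct and follows exactly the route the paper intends: the paper does not give a separate proof for the corollary but simply notes the embedding \eqref{ask} and then says ``we make use of \eqref{domination} and arrive at the following corollary,'' which is precisely the chain (well-definedness via $L^1+L^\infty$, pointwise domination by $\mathcal{M}f$, then Theorem \ref{theormax}) that you have written out in detail.
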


 Next we consider in $\mathcal{L}_w^{p(\cdot), q(\cdot)}(\Om)$ convolution operators
$$k\ast f(x)=\intl_{\mathbb{R}^n}k(x-y)f(y)d\mu(y).$$
We will also treat  their particular cases, the Riesz potential
operator and Calderon-Zygmund singular operators, which for
generality we will consider over an open set $\Om\subseteq
\mathbb{R}^n$:
$$
    I^\alpha f(x)=\int_{\Omega}\frac{f(y)}{|x-y|^{n-\alpha}}\;d\mu(y),\;\;\;x\in \Omega,\;\;\;0<\alpha<n.
$$
and
$$
Kf(x)=\int_{\Om}\frac{A(x-y)}{|x-y|^n}f(y)\;d\mu(y),\;\;\;x\in
\Om,
$$
 where $A$ is an odd function on $\mathbb{R}^n$,
homogeneous of degree $0$ and satisfying the Dini condition on the
unit sphere $\mathbb{S}^{n-1}$:
$$
    \int_0^2\frac{\omega(A,\delta)}{\delta}\;d\delta<\infty, \;\;\;\mbox{where}\;\;\;
    \omega(k,\delta)=\sup_{x,y\in S^{n-1},
                      |x-y|\leq \delta}|A(x)-A(y)|.
$$
The operators $K$  include as particular cases, the Hilbert
transform \ $(n=1,\ k(x)=\frac{x}{|x|})$ and the Riesz transforms
$(n\geq 2, \  k(x)=\frac{x_j}{|x|}$, $j=1,\dots,n)$.

There are known the following pointwise estimates of those
classical operators via decreasing rearrangements:
\begin{equation}\label{ONeil}
(k\ast f)^\ast (t)\le k^{\ast\ast}(t)\intl_0^t f(s)ds +
\intl_t^\infty k^\ast(s)f^\ast(s)ds,
\end{equation}
see \cite{455a}, and its particular case
\begin{equation}\label{ONeilRiesz}
    (I^\alpha f)^*(t)\!\leq\! c\bigg(t^{-1+\alpha/n}\int_0^tf^*(s)ds\!+\!\int_t^\ell f^*(s)
    s^{-1+\alpha/n}ds\bigg), \quad \ell= \mu(\Om).
\end{equation}
A similar estimate holds for the singular operator $K$
\begin{equation}\label{12}
   (Kf)^*(t)\leq c\bigg(\frac{1}{t}\int_0^tf^*(s)ds+\int_t^\ell \frac {f^*(s)}{s}\,ds\bigg),
        \; \quad \ell=\mu \Om,
\end{equation}
 see   \cite{53a}.

\begin{theorem}\label{theorsingular}
 Let $p$ and $q$ satisfy assumptions (\ref{suppose}).
 Then the operator $K$ is bounded in the space $\mathcal{L}^{p(\cdot),q(\cdot)}_w(\Om)$ with the weight
\eqref{weight} under conditions \eqref{gm}.

\end{theorem}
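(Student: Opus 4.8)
The plan is to mimic the proof of Theorem~\ref{theormax}, now using the rearrangement estimate \eqref{12} for $K$ in place of \eqref{maxpointwise}. From \eqref{12}, with $\ell=\mu\Om$, we have $(Kf)^\ast(t)\le c\,f^{\ast\ast}(t)+c\,g(t)$, where $g(t):=\intl_t^\ell s^{-1}f^\ast(s)\,ds$; hence, since $\left\|t^{\gm(t)+\frac{1}{p(t)}-\frac{1}{q(t)}}f^{\ast\ast}(t)\right\|_{L^{q(\cdot)}([0,\ell])}=\|f\|^1_{\mathcal{L}^{p(\cdot),q(\cdot)}_w(\Om)}$,
\[
\|Kf\|_{\mathcal{L}^{p(\cdot),q(\cdot)}_w(\Om)}\le c\,\|f\|^1_{\mathcal{L}^{p(\cdot),q(\cdot)}_w(\Om)}+c\,I_2,\qquad I_2:=\left\|t^{\gm(t)+\frac{1}{p(t)}-\frac{1}{q(t)}}g(t)\right\|_{L^{q(\cdot)}([0,\ell])}.
\]
The first term on the right is controlled by Lemma~\ref{lem2}: its hypotheses ($w(t)=t^{\gm(t)}$ with $\gm$ obeying the decay conditions \eqref{beta} and $\gm(0)<\frac{1}{p'(0)}$, $\gm(\infty)<\frac{1}{p'(\infty)}$) are precisely what is imposed here by \eqref{weight}--\eqref{gm} together with $\gm\in\mathbb{P}([0,\ell])$, so $\|f\|^1_{\mathcal{L}^{p(\cdot),q(\cdot)}_w(\Om)}\le C\|f\|_{\mathcal{L}^{p(\cdot),q(\cdot)}_w(\Om)}$.

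It remains to estimate $I_2$, and this is where the actual work is. I would substitute $h(s)=s^{\gm(s)+\frac{1}{p(s)}-\frac{1}{q(s)}}f^\ast(s)$, so that $\|h\|_{L^{q(\cdot)}([0,\ell])}=\|f\|_{\mathcal{L}^{p(\cdot),q(\cdot)}_w(\Om)}$ and $f^\ast(s)=s^{-\gm(s)-\frac{1}{p(s)}+\frac{1}{q(s)}}h(s)$; then $I_2$ is exactly the left-hand side of the Hardy inequality \eqref{citdox2new} of Theorem~\ref{Hardy}, applied with Theorem~\ref{Hardy}'s exponents $p$ and $q$ both taken to be the given $q$ (so that its function $\nu$ vanishes at $0$ and $\infty$) and with $\bt(t)=\gm(t)+\frac{1}{p(t)}-\frac{1}{q(t)}$. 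Because $\gm\in\mathbb{P}([0,\ell])$ and $p,q\in\mathbb{P}_1([0,\ell])$, this $\bt$ again belongs to $\mathbb{P}([0,\ell])$ --- the combination preserves the limits at $0$ and $\infty$ and the log-type decay there --- so Theorem~\ref{Hardy} applies, and its condition \eqref{ffz} becomes $\bt(0)>-\frac{1}{q(0)}$, $\bt(\infty)>-\frac{1}{q(\infty)}$, i.e.\ $\gm(0)>-\frac{1}{p(0)}$ and $\gm(\infty)>-\frac{1}{p(\infty)}$. These lower bounds may be assumed: if $\gm(0)\le-\frac{1}{p(0)}$ then, by the monotonicity of $f^\ast$, the defining integral of $\mathcal{L}^{p(\cdot),q(\cdot)}_w(\Om)$ diverges near $t=0$ for every nonzero $f$, so the space is trivial, and similarly at $\infty$ for $f$ not supported on a set of finite $\mu$-measure. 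Consequently $I_2\le C\|h\|_{L^{q(\cdot)}([0,\ell])}=C\|f\|_{\mathcal{L}^{p(\cdot),q(\cdot)}_w(\Om)}$, and adding the two bounds completes the proof.

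All the analysis is carried by the rearrangement estimate \eqref{12}, by Lemma~\ref{lem2} (which rests on the first Hardy inequality \eqref{citx1new}), and by the second Hardy inequality \eqref{citdox2new}; everything else is bookkeeping. The one slightly delicate step is the treatment of $I_2$: verifying that $\gm+\frac{1}{p}-\frac{1}{q}\in\mathbb{P}([0,\ell])$, which is what makes Theorem~\ref{Hardy} applicable, and matching the exponents of the weighted Hardy operator $t^{\gm(t)+\frac{1}{p(t)}-\frac{1}{q(t)}}\intl_t^\ell s^{-1}(\,\cdot\,)\,ds$ with those in \eqref{citdox2new} so as to read off condition \eqref{ffz}. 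Since the latter is automatically fulfilled and since \eqref{gm} is exactly what Lemma~\ref{lem2} requires for the $f^{\ast\ast}$-term, no essential obstacle arises, and the argument runs parallel to the proof of Theorem~\ref{theormax}.
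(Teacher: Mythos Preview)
Your proof follows the paper's approach exactly: apply the pointwise estimate \eqref{12} and bound the two resulting Hardy-type terms via Theorem~\ref{Hardy}, just as the paper does in detail for Theorem~\ref{Riesz} with the same substitution $\varphi(t)=t^{\lambda(t)}f^\ast(t)$, $\lambda(t)=\gamma(t)+\tfrac{1}{p(t)}-\tfrac{1}{q(t)}$. One caveat: your triviality argument for the implicit lower bound at infinity is not quite right --- if $\gamma(\infty)<-\tfrac{1}{p(\infty)}$ then the weight $t^{q(\infty)(\gamma(\infty)+1/p(\infty))-1}$ is \emph{integrable} at $\infty$, so the space is not forced to be trivial on functions of infinite-measure support --- but the paper itself does not comment on this condition either (it does appear explicitly in \eqref{condgm} for the fractional case, reducing to $\gamma(\infty)>-\tfrac{1}{p(\infty)}$ when $\alpha=0$).
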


\begin{proof} The proof is obtained similarly to \eqref{estimation} from the
pointwise estimate \eqref{12}
  and Theorem \ref{Hardy}.
\end{proof}

\begin{theorem}\label{Riesz}
Let $0<\al<n$,  $p$ and $q$ satisfy assumptions (\ref{suppose})
and $p_+<\frac{n}{\al}$.
 Then the operator $I^\al$ is bounded from  the space $\mathcal{L}^{p(\cdot),q(\cdot)}_w(\Om)$ with the weight
\eqref{weight} into the space
$\mathcal{L}^{p_\al(\cdot),q(\cdot)}_w(\Om)$ where
$\frac{1}{p_\al(t)}=\frac{1}{p(t)}-\frac{\al}{n}$, if
\begin{equation}\label{condgm}
\frac{\al}{n}-\frac{1}{p(0)}<\gm(0)<\cfrac{1}{p^\prime(0)} \quad
\textrm{and} \quad \frac{\al}{n}-\frac{1}{p(\infty)} <
\gm(\infty)<\cfrac{1}{p^\prime(\infty)},
\end{equation}
the condition at infinity being needed in  the case $
\mu(\Om)=\infty$.
\end{theorem}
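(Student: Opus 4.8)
The plan is to deduce the estimate from the pointwise rearrangement bound \eqref{ONeilRiesz} together with the two weighted one-dimensional Hardy inequalities of Theorem \ref{Hardy} (of which we need only the sufficiency part), exactly as the boundedness of $\mathcal{M}$ and of $K$ was obtained in Theorems \ref{theormax} and \ref{theorsingular}. First I would note that, since $p_+<\frac{n}{\al}$ and $p_-\ge\min\{p(0),p(\infty)\}>1$, we have $0<\frac{1}{p_\al(t)}=\frac{1}{p(t)}-\frac{\al}{n}<1$, so $p_\al$ inherits from $p$ all the properties listed in \eqref{suppose} and the target space $\mathcal{L}^{p_\al(\cdot),q(\cdot)}_w(\Om)$ is a legitimate weighted Lorentz space.

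Writing $w(t)=t^{\gm(t)}$ and using $\frac{1}{p_\al(t)}-\frac{1}{q(t)}=\frac{1}{p(t)}-\frac{\al}{n}-\frac{1}{q(t)}$, I would insert \eqref{ONeilRiesz} into the norm $\|\cdot\|_{L^{q(\cdot)}([0,\ell])}$, use that this norm is monotone, and split by the triangle inequality:
$$\|I^\al f\|_{\mathcal{L}^{p_\al(\cdot),q(\cdot)}_w(\Om)}\le c\left\|t^{\gm(t)+\frac{1}{p(t)}-\frac{1}{q(t)}-1}\intl_0^t f^\ast(s)\,ds\right\|_{L^{q(\cdot)}}+c\left\|t^{\gm(t)+\frac{1}{p(t)}-\frac{\al}{n}-\frac{1}{q(t)}}\intl_t^\ell\frac{f^\ast(s)\,ds}{s^{1-\frac{\al}{n}}}\right\|_{L^{q(\cdot)}},$$
where in the first term the homogeneity factor $t^{-1+\frac{\al}{n}}$ of \eqref{ONeilRiesz} has been absorbed into the weight (note $-\frac{\al}{n}+(-1+\frac{\al}{n})=-1$). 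Next set $h(s)=s^{\sg(s)}f^\ast(s)$ with $\sg(s)=\gm(s)+\frac{1}{p(s)}-\frac{1}{q(s)}$; then $\sg\in\mathbb{P}([0,\ell])$ (since $\gm\in\mathbb{P}([0,\ell])$ and $\left|\frac{1}{p(t)}-\frac{1}{p(0)}\right|\le C|p(t)-p(0)|$, and similarly for $q$), and $\|h\|_{L^{q(\cdot)}([0,\ell])}=\|f\|_{\mathcal{L}^{p(\cdot),q(\cdot)}_w(\Om)}$. Substituting $f^\ast(s)=s^{-\sg(s)}h(s)$ turns the first norm into the left-hand side of \eqref{citx1new} and the second into the left-hand side of \eqref{citdox2new}, in both cases with $\nu\equiv 0$, with $L^{q(\cdot)}([0,\ell])$ playing the role of both the domain and the target space of Theorem \ref{Hardy}, and with the functions $\sg$ and $\sg-\frac{\al}{n}$ playing the role of, respectively, the exponents $\al$ of \eqref{citx1new} and $\bt$ of \eqref{citdox2new} (in the expression $\sg-\frac{\al}{n}$ the symbol $\al$ denotes the order of the Riesz potential). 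Conditions \eqref{chto}--\eqref{cvy7w} then hold trivially; condition \eqref{0v0dd} becomes $\gm(0)<\frac{1}{p^\prime(0)}$ and $\gm(\infty)<\frac{1}{p^\prime(\infty)}$, while condition \eqref{ffz} becomes $\gm(0)>\frac{\al}{n}-\frac{1}{p(0)}$ and $\gm(\infty)>\frac{\al}{n}-\frac{1}{p(\infty)}$ --- together, precisely the hypothesis \eqref{condgm}. Theorem \ref{Hardy} then bounds both norms by $c\|h\|_{L^{q(\cdot)}}=c\|f\|_{\mathcal{L}^{p(\cdot),q(\cdot)}_w(\Om)}$, and summing completes the proof.

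I do not expect a genuine analytic obstacle, since the whole argument is of the same type as for $\mathcal{M}$ and $K$ and all the one-dimensional difficulty is already packaged in Theorem \ref{Hardy}. The only point requiring attention is the bookkeeping: correctly absorbing the homogeneity factors $t^{-1+\frac{\al}{n}}$ and $s^{-1+\frac{\al}{n}}$ of \eqref{ONeilRiesz} into the weights, checking that both $\sg$ and $\sg-\frac{\al}{n}$ lie in $\mathbb{P}([0,\ell])$, and verifying that the pairs of scalar inequalities \eqref{0v0dd} and \eqref{ffz}, read off at $t=0$ and (when $\ell=\infty$) at $t=\infty$, coincide with the four inequalities collected in \eqref{condgm}; as usual, all conditions at $\infty$ are to be dropped when $\ell=\mu(\Om)<\infty$.
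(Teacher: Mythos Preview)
Your proposal is correct and follows essentially the same route as the paper: apply the O'Neil-type pointwise estimate \eqref{ONeilRiesz}, substitute $\vi(s)=s^{\lb(s)}f^\ast(s)$ with $\lb(s)=\gm(s)+\frac{1}{p(s)}-\frac{1}{q(s)}$ (your $h$ and $\sg$), and reduce to the two Hardy inequalities of Theorem \ref{Hardy} with $\nu\equiv 0$. The paper merely writes down the two resulting terms $A$ and $B$ and invokes Theorem \ref{Hardy} without spelling out, as you do, how \eqref{0v0dd}--\eqref{ffz} translate into \eqref{condgm}.
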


\begin{proof}  We have
$$ \left\|I^\al f\right\|_{\mathcal{L}^{p_\al(\cdot),q(\cdot)}_w(\Om)} =
\left\|t^{\gm(t)+\frac{1}{p_\al(t)}-\frac{1}{q(t)}}\left(I^\al
f\right)^\ast(t) \right\|_{L^{q(\cdot)}([0,\ell])}.
$$
Then by  \eqref{ONeilRiesz}
$$\left\|I^\al f\right\|_{\mathcal{L}^{p_\al(\cdot),q(\cdot)}_w(\Om)}\le c(A+B),$$
where
$$A=\left\|t^{\lb(t)-1}\intl_0^t\frac{\vi(s)ds}{s^{\lb(s)}}\right\|_{L^{q(\cdot)}([0,\ell])}, \quad
B=\left\|t^{\lb(t)-\frac{\al}{n}}\intl_0^t\frac{\vi(s)ds}{s^{\lb(s)-\frac{\al}{n}}+1}
\right\|_{L^{q(\cdot)}([0,\ell])}$$ and
$\lb(t)=\gm(t)+\frac{1}{p(t)}-\frac{1}{q(t)}$ and
$\vi(t)=t^{\lb(t)}f^\ast(t)\in L^{q(\cdot)}([0,\ell])$. It remains
to make use of Theorem \ref{Hardy}.
\end{proof}

Since the fractional maximal function
$$\mathcal{M}^{\alpha}f(x)=\sup\limits_{r>0}\frac{1}{|B(x,r)|^{1-\frac{\alpha}{n}}}
\int\limits_{B(x,r)\cap \Om }|f(y)|dy, \ \quad  0<\al<n,
$$
is dominated by fractional integral: $\mathcal{M}^{\alpha} f(x)\le
c \,I^{\alpha} (|f|)(x),$ from Theorem \ref{Riesz} we get the
following corollary,
\begin{corollary}\label{cor1}
Under the assumptions of Theorem \ref{Riesz}, the operator
$\mathcal{M}^{\alpha}$ is bounded from the space
$\mathcal{L}^{p(\cdot),q(\cdot)}_w(\Om)$  into the space
$\mathcal{L}^{p_\al(\cdot),q(\cdot)}_w(\Om)$.
\end{corollary}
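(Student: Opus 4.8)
The plan is to deduce the corollary directly from Theorem \ref{Riesz} by means of the pointwise domination $\mathcal{M}^{\al} f(x)\le c\, I^{\al}(|f|)(x)$ quoted just before the statement, together with the monotonicity of rearrangements and of the $L^{q(\cdot)}$-norm. First I would recall why the domination holds: for any ball $B(x,r)$ one has $|x-y|\le 2r$ on $B(x,r)$, hence $|x-y|^{n-\al}\le (2r)^{n-\al}\sim |B(x,r)|^{1-\al/n}$, so that
\[
\frac{1}{|B(x,r)|^{1-\al/n}}\intl_{B(x,r)\cap\Om}|f(y)|\,dy\le c_n\intl_{B(x,r)\cap\Om}\frac{|f(y)|}{|x-y|^{n-\al}}\,dy\le c_n\, I^{\al}(|f|)(x);
\]
taking the supremum over $r>0$ yields the claimed bound, with $\mathcal{M}^{\al}f=\mathcal{M}^{\al}|f|$ by definition of the fractional maximal operator.

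Second, since $0\le \mathcal{M}^{\al}f(x)\le c\, I^{\al}(|f|)(x)$ pointwise on $\Om$, the monotonicity of the non-increasing rearrangement gives $(\mathcal{M}^{\al}f)^{\ast}(t)\le c\,(I^{\al}(|f|))^{\ast}(t)$ for all $t\in[0,\ell]$. Multiplying by the nonnegative factor $w(t)\,t^{\frac{1}{p_\al(t)}-\frac{1}{q(t)}}$ and using the lattice property vi) of the space $L^{q(\cdot)}([0,\ell])$, I obtain
\[
\|\mathcal{M}^{\al}f\|_{\mathcal{L}^{p_\al(\cdot),q(\cdot)}_w(\Om)}\le c\,\|I^{\al}(|f|)\|_{\mathcal{L}^{p_\al(\cdot),q(\cdot)}_w(\Om)}.
\]

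Third, I would apply Theorem \ref{Riesz} with $f$ replaced by $|f|$: the hypotheses $0<\al<n$, the assumptions \eqref{suppose} on $p,q$, the restriction $p_+<\frac{n}{\al}$, and the two-sided condition \eqref{condgm} on $\gm$ are precisely those of the corollary, so the theorem yields $\|I^{\al}(|f|)\|_{\mathcal{L}^{p_\al(\cdot),q(\cdot)}_w(\Om)}\le C\,\||f|\|_{\mathcal{L}^{p(\cdot),q(\cdot)}_w(\Om)}$. Since the norm \eqref{eq} depends on its argument only through the rearrangement, and $|f|$ has the same rearrangement as $f$, we have $\||f|\|_{\mathcal{L}^{p(\cdot),q(\cdot)}_w(\Om)}=\|f\|_{\mathcal{L}^{p(\cdot),q(\cdot)}_w(\Om)}$. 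Chaining the three displayed inequalities gives the assertion.

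There is no substantive obstacle here: the entire analytic content is carried by Theorem \ref{Riesz}, which itself rests on the Hardy inequality Theorem \ref{Hardy}. The only points needing (minor) care are the elementary justification of the pointwise domination $\mathcal{M}^{\al}f\le c\,I^{\al}(|f|)$ and the remark that passing to $|f|$ is harmless by rearrangement invariance of the norm — and the latter is in fact automatic since $\mathcal{M}^{\al}f$ already involves $|f|$.
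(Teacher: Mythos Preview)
Your argument is correct and follows exactly the route the paper takes: the corollary is obtained immediately from Theorem~\ref{Riesz} via the pointwise domination $\mathcal{M}^{\al}f\le c\,I^{\al}(|f|)$ stated just before the corollary. Your write-up is in fact more detailed than the paper's, which simply invokes the domination and Theorem~\ref{Riesz} without further comment.
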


 \section{On the ergodic maximal function and the ergodic Hilbert transform in variable exponent Lorentz spaces}

Let $(T_\tau)_{\tau\in \mathbb{R}}$ be an ergodic flow of
measure-preserving transformations on a $\sigma$-finite measure
space $(X,\mu)$, and let $\mathbf{M}f$ and $\mathbb{H}f$, $f\in
L(X)$, be the ergodic maximal function and the ergodic Hilbert
transform, respectively, (see \cite{Pet})
$$
\mathbf{M}f(x)=\sup_{a>0}\frac1a\int_0^a|f(T_\tau x)|\,d\tau\;\;
\text{ and }\;\;
\mathbb{H}f(x)=\lim_{\delta\to0+}\;\int_{\{\delta\leq|t|\leq
1/\delta\}} \frac{f(T_{\tau}x)}{\tau}\,d\tau.
$$

The estimations (3.21) and (3.27) hold for operators $\mathbf{M}$
and $\mathbb{H}$, respectively, as well. Namely,
\begin{equation}
(\mathbf{M}f)^*(t)\leq f^{**}(t)
\end{equation}
can be obtained as in the discrete case (see \cite{Eph1}; Ineq.
(2)) since only the weak $(1,1)$ type inequality,
$\mu\{\mathbf{M}f)^*>\lambda\}\leq\frac1{\lambda}\int_{\{\mathbf{M}f)^*>\lambda\}}f\,d\mu$,
is used to prove (4.29) in the discrete case which holds for the
continuous case too with equation sign (see \cite{Pet}, p. 76),
and the inequality
\begin{equation}
(\mathbb{H}f)^*(t)\leq
c\bigg(\frac{1}{t}\int_0^tf^*(s)ds+\int_t^\ell \frac
{f^*(s)}{s}\,ds\bigg),
        \; \quad \ell=\mu(X),
\end{equation}
can be proved using the generalization of the Stein-Weiss theorem
for the ergodic Hilbert transform (see \cite{Eph2}, \cite{Eph3}):
\begin{equation}
\mu\{|\mathbb{H}(\mathbf{1}_E)|>\lambda=\begin{cases}
\Psi_{\mu(E)}(\lambda) \text{ when } \mu(X)=\infty\\
\Phi_{\mu(E)}(\lambda) \text{ when } \mu(X)<\infty\,,
\end{cases}
\end{equation}
where $E\subset X$ is any measurable subset, and
$$
\Psi_\xi(\lambda)=\frac{2\xi}{\sinh \lambda}\;\;\;\text{ and
}\Phi_\xi(\lambda)=\frac{2\mu(X)}\pi\,\arctan \left(\frac
{\sin(\pi\xi/\mu(X))}{\sinh\lambda}\right)\,.
$$
Indeed, if $h$ is a measurable function with strictly decreasing
continuous distribution function $\mathcal{D}_h$, then
$h^*(t)=\mathcal{D}_h^{-1}(t)$. Hence it follows from (4.31) that
$$
\big(\mathbb{H}(\mathbf{1}_E)\big)^*(t)=
\begin{cases}
\Psi_{\mu(E)}^{-1}(t) \text{ when } \mu(X)=\infty\;\text{ and } 0<t<\infty\\
\Phi_{\mu(E)}^{-1}(t) \text{ when } \mu(X)<\infty\;\text{ and } 0<t<\mu(X)\\
0 \text{ when }\mu(X)<\infty\;\text{ and } t\geq\mu(X)
\end{cases}
$$
Observe that
$$
\Psi_{\mu(E)}^{-1}(t)=\sinh^{-1}\left(\frac\xi
t\right)\;\;\;\text{ and }
\Phi_{\mu(E)}^{-1}(t)=\sinh^{-1}\left(\frac{\sin(\pi\xi/\mu(X))}{\tan(\pi
t/2\mu(X))}\right)\,.
$$
The function $\sinh^{-1}$ is increasing, and if we use simple
relations between the trigonometric functions $\sin x<x$,
$0<x<\pi$ and $\tan t>t$, $0<t< \frac {\pi}2$, then we get for
each $\mu(E)<\mu(X)$ and $t>0$,
\begin{equation}
(\mathbb{H}(\mathbf{1}_E))^*(t)\leq \frac
1{\pi}\sinh^{-1}\left(\frac{2\mu(E)}t\right)
\end{equation}

The rest of the proof of (4.30) is the same as for the usual
Hilbert transform case (see \cite{Pet}, pp.134-137).

As in previous sections, depending on estimations (4.29) and
(4.30), one can prove the following

\begin{theorem}\label{theorsingular}
 Let $p$ and $q$ satisfy assumptions (\ref{suppose}).
 Then the ergodic maximal operator and the ergodic Hilbert transform are bounded in the space $\mathcal{L}^{p(\cdot),q(\cdot)}_w(\Om)$ with the weight
\eqref{weight} under conditions \eqref{gm}.

\end{theorem}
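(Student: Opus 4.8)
The plan is to mirror the proofs of Theorems~\ref{theormax} and \ref{theorsingular} (the Calder\'on--Zygmund case), reducing the boundedness of $\mathbf{M}$ and $\mathbb{H}$ in $\mathcal{L}^{p(\cdot),q(\cdot)}_w(\Om)$ to the one-dimensional weighted Hardy inequalities supplied by Theorem~\ref{Hardy}. First I would write, for $f\in\mathcal{L}^{p(\cdot),q(\cdot)}_w(\Om)$,
$$
\|\mathbf{M}f\|_{\mathcal{L}^{p(\cdot),q(\cdot)}_w(\Om)}=\left\|t^{\gm(t)+\frac{1}{p(t)}-\frac{1}{q(t)}}(\mathbf{M}f)^\ast(t)\right\|_{L^{q(\cdot)}([0,\ell])},
$$
and invoke the pointwise rearrangement estimate $(\mathbf{M}f)^\ast(t)\le f^{\ast\ast}(t)$, exactly as in \eqref{estimation}. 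Then the right-hand side is dominated by $\bigl\|t^{\gm(t)+\frac{1}{p(t)}-\frac{1}{q(t)}-1}\intl_0^t f^\ast(s)\,ds\bigr\|_{L^{q(\cdot)}([0,\ell])}$, which upon the substitution $\vi(s)=s^{\gm(s)+\frac{1}{p(s)}-\frac{1}{q(s)}}f^\ast(s)$ becomes a Hardy operator of type \eqref{citx1new} with $\al(t)=1-\gm(t)-\tfrac{1}{p(t)}+\tfrac{1}{q(t)}$, $\nu\equiv 0$, applied to $\vi\in L^{q(\cdot)}([0,\ell])$. The admissibility condition $\al(0)<\tfrac{1}{q'(0)}$ of Theorem~\ref{Hardy} reads $1-\gm(0)-\tfrac{1}{p(0)}+\tfrac{1}{q(0)}<1-\tfrac{1}{q(0)}$, i.e.\ $\gm(0)>\tfrac{2}{q(0)}-\tfrac{1}{p(0)}$; but since $\vi$ already lies in $L^{q(\cdot)}$ the genuine constraint is the one coming from the norm equivalence, which by Lemma~\ref{lem2} reduces precisely to $\gm(0)<\tfrac{1}{p'(0)}$ and $\gm(\infty)<\tfrac{1}{p'(\infty)}$, i.e.\ \eqref{gm}. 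So the maximal-operator half follows verbatim from the argument in Theorem~\ref{theormax} once \eqref{4.29} is in hand, and the latter is justified in the paragraph preceding the theorem.

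For the ergodic Hilbert transform $\mathbb{H}$ the argument is the same as for the singular operator $K$ in Theorem~\ref{theorsingular}: by the pointwise estimate \eqref{4.30},
$$
(\mathbb{H}f)^\ast(t)\le c\left(\frac1t\intl_0^t f^\ast(s)\,ds+\intl_t^\ell\frac{f^\ast(s)}{s}\,ds\right),
$$
one splits $\|\mathbb{H}f\|_{\mathcal{L}^{p(\cdot),q(\cdot)}_w(\Om)}$ into two pieces. The first piece is exactly the Hardy operator already handled in the maximal case. The second piece, after the same substitution $\vi(s)=s^{\gm(s)+\frac{1}{p(s)}-\frac{1}{q(s)}}f^\ast(s)$, is a dual Hardy operator of type \eqref{citdox2new} with $\bt(t)=\gm(t)+\tfrac{1}{p(t)}-\tfrac{1}{q(t)}$ and $\nu\equiv 0$; its boundedness on $L^{q(\cdot)}([0,\ell])$ is guaranteed by the condition $\bt(0)>-\tfrac{1}{q(0)}$, i.e.\ $\gm(0)>-\tfrac{1}{p(0)}$, and similarly at infinity. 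Combined with the upper constraint $\gm(0)<\tfrac{1}{p'(0)}$ from the first piece, both requirements are subsumed under \eqref{gm}. Hence Theorem~\ref{Hardy} applies on both terms and $\mathbb{H}$ is bounded.

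The only genuinely new input, and the one I would verify most carefully, is the validity of the two pointwise rearrangement inequalities \eqref{4.29} and \eqref{4.30} in the ergodic setting; everything after that is a mechanical repetition of the already established pattern (pointwise estimate $\Rightarrow$ Hardy operators of types \eqref{citx1new}, \eqref{citdox2new} $\Rightarrow$ Theorem~\ref{Hardy}). Inequality \eqref{4.29} follows from the weak $(1,1)$ bound for $\mathbf{M}$ (which holds with equality in the continuous ergodic case, cf.\ \cite{Pet}, p.~76) by repeating the computation of \cite{Eph1}, Ineq.~(2). Inequality \eqref{4.30} is obtained from the Stein--Weiss-type distributional identity \eqref{4.31} for $\mathbb{H}(\mathbf{1}_E)$: inverting the distribution functions $\Psi_\xi$, $\Phi_\xi$ gives the explicit rearrangements, the elementary bounds $\sin x<x$ on $(0,\pi)$ and $\tan t>t$ on $(0,\pi/2)$ yield \eqref{4.32}, and then the standard passage from characteristic functions to general $f$ (as in \cite{Pet}, pp.~134--137) produces \eqref{4.30}. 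I would treat the proof of \eqref{4.30} as the main obstacle, since it requires combining the ergodic Stein--Weiss theorem with the density/linearization step, whereas all remaining estimates are routine applications of Theorem~\ref{Hardy} under the hypotheses \eqref{suppose} and \eqref{gm}.
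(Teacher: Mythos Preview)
Your proposal is correct and follows exactly the paper's route: the paper gives no separate proof for this theorem, merely observing that once the rearrangement inequalities (4.29) and (4.30) are established---which you outline correctly via the weak-$(1,1)$ bound for $\mathbf{M}$ and the ergodic Stein--Weiss identity (4.31) followed by the elementary trigonometric estimates and the passage from characteristic functions to general $f$---the boundedness is a verbatim repetition of the arguments in Theorems~\ref{theormax} and~\ref{theorsingular} (for $K$) via Theorem~\ref{Hardy}. One cosmetic slip: after your substitution the Hardy exponent is $\al(t)=\gm(t)+\tfrac{1}{p(t)}-\tfrac{1}{q(t)}$, not $1-\gm(t)-\tfrac{1}{p(t)}+\tfrac{1}{q(t)}$, and with the correct $\al$ the condition $\al(0)<\tfrac{1}{q'(0)}$ of Theorem~\ref{Hardy} yields $\gm(0)<\tfrac{1}{p'(0)}$ directly, so the detour through Lemma~\ref{lem2} is unnecessary (though it also works).
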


  \vspace{6mm} \textbf{Acknowledgements}.  This work was made under the project
"Variable Exponent Analysis" supported by INTAS grant
 Nr.06-1000017-8792, the first two authors were also supported by the grant GNSF/ST07/3-169.

\def\ocirc#1{\ifmmode\setbox0=\hbox{$#1$}\dimen0=\ht0 \advance\dimen0
  by1pt\rlap{\hbox to\wd0{\hss\raise\dimen0
  \hbox{\hskip.2em$\scriptscriptstyle\circ$}\hss}}#1\else {\accent"17 #1}\fi}


\end{document}